\documentclass[11pt]{amsart}

\usepackage{amsmath, amssymb, amsthm}
\usepackage{mathtools}
\usepackage{tikz-cd}
\usepackage{hyperref}
\usepackage{cleveref}
\usepackage{enumitem}
\usepackage[margin=1in]{geometry}
\usepackage{booktabs}

\newtheorem{maintheorem}{Theorem}

\theoremstyle{plain}
\newtheorem{theorem}{Theorem}[section]
\newtheorem{proposition}[theorem]{Proposition}
\newtheorem{lemma}[theorem]{Lemma}
\newtheorem{corollary}[theorem]{Corollary}

\theoremstyle{definition}
\newtheorem{definition}[theorem]{Definition}
\newtheorem{example}[theorem]{Example}
\newtheorem{notation}[theorem]{Notation}

\newtheorem{remark}[theorem]{Remark}

\DeclareMathOperator{\Spec}{Spec}
\newcommand{\Hom}{\mathrm{Hom}}
\newcommand{\colim}{\mathrm{colim}}
\newcommand{\Set}{\mathbf{Set}}

\newcommand{\CRing}{\mathbf{CRing}}
\newcommand{\CMonoid}{\mathbf{CMonoid}}
\newcommand{\GSet}{\Gamma\mathbf{Set}_*}

\newcommand{\FAlg}{\mathbb{F}_1\mathbf{Alg}}
\newcommand{\AffSch}{\mathbf{AffSch}}
\newcommand{\FF}{\mathbb{F}_1}
\newcommand{\ZZ}{\mathbb{Z}}

\hypersetup{
    colorlinks=true,
    linkcolor=blue,
    citecolor=blue,
    urlcolor=blue
}

\begin{document}

\title{Localization and Affine Schemes over $\mathbb{F}_1$}

\author{Luqiao Xu}
\address{[Johns Hopkins University]}
\email{[lxu46@jhu.edu]}

\date{\today}

\begin{abstract}
We develop the basic notions of commutative algebra and algebraic geometry over the field with one element $\mathbb{F}_1$, working within the Connes-Consani framework, which models $\mathbb{F}_1$-algebras as monoid objects in the category of $\Gamma$-sets. In this setting, $\mathbb{F}_1$-algebras generalize commutative rings by encoding the algebraic structure functorially, using machinery originating in homotopy theory. Our main contribution is a theory of localization for $\mathbb{F}_1$-algebras and the construction of prime spectrum $\Spec A$ for a commutative $\mathbb{F}_1$-algebra $A$. We then prove that $\Gamma(X, \mathcal{O}_X) = A$ for any absolute affine scheme $X=\Spec A$ and establish an anti-equivalence between the category of commutative $\mathbb{F}_1$-algebras and the category of absolute affine schemes.
\end{abstract}

\maketitle

\tableofcontents

\section{Introduction}

Classical algebraic geometry begins with affine schemes: for a commutative ring $R$, the affine scheme $\Spec R$ consists of the prime spectrum $|\Spec R|$ (the set of prime ideals with the Zariski topology) equipped with a structure sheaf $\mathcal{O}_R$ of commutative rings. The contravariant functor $\Spec: \CRing^{\mathrm{op}} \to \AffSch_{\ZZ}$ establishes an anti-equivalence of categories.

In \cite{SAG}, Lurie further extends this picture to \emph{spectral algebraic geometry}, where commutative rings are replaced by $\mathbb{E}_\infty$-ring spectra---highly structured ring objects in stable homotopy theory. For an $\mathbb{E}_\infty$-ring $A$, Lurie defines the prime spectrum as:
$$(|\Spec A|, \mathcal{O}_A)$$
where:
\begin{itemize}
\item $|\Spec A| = \Spec(\pi_0 A)$ is the Zariski spectrum of the \emph{underlying commutative ring} $\pi_0 A$ (the 0-th homotopy group)
\item $\mathcal{O}_A$ is a sheaf of $\mathbb{E}_\infty$-rings obtained by localizing $A$ with respect to elements in $\pi_0 A$
\end{itemize}

Our goal in this paper is to develop an analogous theory for \emph{absolute algebraic geometry}---the geometry over the ``field with one element'' $\mathbb{F}_1$, following Connes and Consani's framework \cite{connes2016absolute}, which models $\mathbb{F}_1$-algebras as monoid objects in the category of Segal's $\Gamma$-sets, or discrete $\Gamma$-spaces.

\begin{center}
\begin{tabular}{c|c|c}
\toprule
& Spectral AG & Absolute AG \\
\midrule
Algebras & $\mathbb{E}_\infty$-rings & $\mathbb{F}_1$-algebras \\
Underlying structure & $\pi_0 A$ (comm. ring) & $A(1_+)$ (pointed monoid) \\
Spectrum topology & $\Spec(\pi_0 A)$ & $\Spec(A(1_+))$ (Deitmar) \\
Structure sheaf & Localize $A$ at $\pi_0 A$ & Localize $A$ at $A(1_+)$ \\
\bottomrule
\end{tabular}
\end{center}

In spectral algebraic geometry, $\pi_0(-)$ extracts the underlying commutative ring from an $\mathbb{E}_\infty$-ring. In absolute algebraic geometry, the functor $-(1_+)$ extracts the underlying pointed commutative monoid from an $\mathbb{F}_1$-algebra. This analogy guides our construction.

For a commutative $\mathbb{F}_1$-algebra $A$, we define the prime spectrum as a locally $\mathbb{F}_1\mathrm{Alg}$-ed space:
$$\Spec A := (|\Spec A|, \mathcal{O}_A)$$
where:
\begin{itemize}
\item $|\Spec A|$ is the prime spectrum of the underlying pointed commutative monoid $A(1_+)$ in the sense of Deitmar \cite{deitmar2006schemesf1}
\item $\mathcal{O}_A$ is a sheaf of $\mathbb{F}_1$-algebras obtained by localizing $A$ with respect to elements in $A(1_+)$
\end{itemize}

Two key technical challenges arise in developing this theory. First, we must carefully choose the correct notion of prime ideal. Unlike in classical ring theory---or even in hyperring theory---we disregard any additive closure conditions in our definition of prime ideals. This is necessary when working with $\mathbb{F}_1$-algebras since the additive structure encoded by a $\Gamma$-set is far looser than that of a ring or hyperring, and imposing additive closure leads to pathological behavior (see Remark \ref{rem:prime} for a detailed discussion). Our approach thus follows Deitmar's monoid-theoretic spectrum, which considers only the multiplicative structure of $A(1_+)$.

Second, constructing the structure sheaf requires developing a theory of localization for $\mathbb{F}_1$-algebras. Unlike classical localization of rings, where one simply inverts elements, localizing an $\mathbb{F}_1$-algebra is more delicate: the algebra $A$ is not merely a monoid but a $\Gamma$-set---a functorial object carrying coherent information across all levels. The localization process must therefore respect this entire structure, inverting elements of $A(1_+)$ while preserving the $\Gamma$-set functoriality at higher levels.

Our main results are:

\begin{maintheorem}[Localization]\label{thm:main-localization}
Let $A$ be a commutative $\mathbb{F}_1$-algebra and $S \subseteq A(1_+)$ a multiplicatively closed subset. There exists a localization $S^{-1}A$ that is an $\mathbb{F}_1$-algebra with universal property: morphisms $S^{-1}A \to B$ correspond bijectively to morphisms $A \to B$ that send elements of $S$ to units in $B(1_+)$.

Moreover, for the Eilenberg-MacLane algebra $HR$ of a commutative ring $R$, we have
$$S^{-1}(HR) \cong H(S^{-1}R)$$
showing that localization for $\mathbb{F}_1$-algebras generalizes classical localization of commutative rings.
\end{maintheorem}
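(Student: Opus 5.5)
I will construct $S^{-1}A$ levelwise, by a calculus of fractions analogous to the classical one, using the fact that the $\mathbb{F}_1$-algebra structure on $A$ gives, for each finite pointed set $k_+$, an action of the pointed monoid $A(1_+)$ on $A(k_+)$. This action is natural in $k_+$ and comes from the multiplication $A(1_+)\wedge A(k_+)\to A(1_+\wedge k_+)=A(k_+)$.

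\textbf{Construction.} For each $k_+$ define
\[
(S^{-1}A)(k_+) := \bigl(A(k_+)\times S\bigr)/\sim, \qquad (x,s)\sim(y,t) \iff \exists\, u\in S:\ (ut)\cdot x = (us)\cdot y,
\]
with basepoint the class of $(*,1)$. That $\sim$ is an equivalence relation is the usual check, using commutativity of $A(1_+)$ and associativity of the action. For a pointed map $f\colon k_+\to l_+$ set $f_*[x,s]=[A(f)(x),s]$. This is well defined because $A(f)$ commutes with the $A(1_+)$-action. Functoriality is inherited from $A$, so $S^{-1}A$ is a $\Gamma$-set.

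The multiplication $(S^{-1}A)(k_+)\wedge(S^{-1}A)(l_+)\to(S^{-1}A)(k_+\wedge l_+)$ is $[x,s]\cdot[y,t]=[xy,st]$, and the unit is $[1,1]$. I then verify the following:
\begin{itemize}
\item The multiplication is independent of representatives; this is the same computation as for rings, using only naturality of the product in each variable.
\item It is natural and associative.
\item It is commutative in the sense of the paper's definition; the symmetry isomorphism $k_+\wedge l_+\cong l_+\wedge k_+$ enters only through $A$.
\end{itemize}
The map $\lambda\colon A\to S^{-1}A$, $x\mapsto[x,1]$, is a morphism of $\mathbb{F}_1$-algebras, and $\lambda(s)$ has inverse $[1,s]$.

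\textbf{Universal property.} Let $\phi\colon A\to B$ send $S$ into $B(1_+)^\times$. Define $\tilde\phi[x,s]=\phi(s)^{-1}\cdot\phi(x)$, using the $B(1_+)$-action on $B(k_+)$. This is well defined: if $(ut)x=(us)y$, apply $\phi$ and cancel the unit $\phi(u)$. Naturality in $k_+$ and multiplicativity follow from those of $\phi$ and of the action. Uniqueness holds because $[x,s]=[1,s]\cdot[x,1]$ and $[1,s]$ must map to the inverse of $\phi(s)$. Conversely, any morphism $S^{-1}A\to B$ composed with $\lambda$ sends $S$ to units. This gives the claimed bijection.

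\textbf{Comparison with $HR$.} Recall $HR(k_+)=R^{k}$, with the action of $HR(1_+)=R$ being scalar multiplication. Then $(S^{-1}HR)(k_+)=(R^k\times S)/\sim$. The map $[(r_1,\dots,r_k),s]\mapsto(r_1/s,\dots,r_k/s)$ into $(S^{-1}R)^k=H(S^{-1}R)(k_+)$ is well defined and injective, since the relations coincide with those defining $S^{-1}R$ applied coordinatewise with a common $u$. It is surjective by clearing denominators: $(a_i/s_i)$ equals $[(a_i\prod_{j\ne i}s_j),\prod_j s_j]$. Compatibility with $\Gamma$-maps (sums along fibres of $f$) and with products is immediate. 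Alternatively, the isomorphism follows from Yoneda: both objects corepresent the same functor on $\mathbb{F}_1$-algebras, provided one knows $\Hom(HR,B)$ is controlled suitably. The direct computation is safer, so I will use it.

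\textbf{Main obstacle.} The delicate step is proving that the levelwise quotient is genuinely a commutative monoid object in $\GSet$ with respect to the smash/Day convolution product used in the paper. Multiplication on a Day-convolution monoid is specified by the external maps $A(k_+)\wedge A(l_+)\to A(k_+\wedge l_+)$. So I must confirm two things: that the $A(1_+)$-action on $A(k_+)$ is the restriction of this product along $1_+\wedge k_+\cong k_+$, and that it is compatible with the external product, i.e.\ $(ax)(by)=(ab)(xy)$. This compatibility is exactly associativity plus commutativity of the monoid structure, and once it is in hand the remaining verifications reduce to the classical ones.
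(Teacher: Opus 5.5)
Your proposal is correct and follows the paper's argument essentially step for step. The shared ingredients are:
\begin{itemize}
\item the levelwise calculus of fractions with the relation $(ut)x=(us)y$, together with the same structure maps, product $[x,s][y,t]=[xy,st]$, unit and basepoint;
\item the same extension $\tilde\phi[x,s]=\phi(s)^{-1}\phi(x)$, with uniqueness coming from $[x,s]=[1,s][x,1]$;
\item the same coordinatewise map $[(r_i),s]\mapsto(r_i/s)$ for $HR$.
\end{itemize}
You are more explicit than the paper on the points it leaves implicit. These are the compatibility $(ax)(by)=(ab)(xy)$ of the $A(1_+)$-action with the external product, and, in the $HR$ case, the common-witness argument for injectivity (take $u=\prod u_i$) and the common-denominator argument for surjectivity.
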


\begin{maintheorem}[Structure Sheaf]\label{thm:main-structure-sheaf}
Let $A$ be a commutative $\mathbb{F}_1$-algebra and $(|\Spec A|, \mathcal{O}_A)$ its prime spectrum. Then:
\begin{enumerate}[label=(\roman*)]
\item For any prime ideal $\mathfrak{p} \in |\Spec A|$, the stalk $\mathcal{O}_{A,\mathfrak{p}}$ is isomorphic to the localization $A_\mathfrak{p}$ as $\mathbb{F}_1$-algebras.

\item For any element $f \in A(1_+)$, the sections over the basic open $D(f) = \{\mathfrak{p} \mid f \notin \mathfrak{p}\}$ satisfy
$$\mathcal{O}_A(D(f)) \cong A_f$$
where $A_f$ is the localization at $f$.

\item The global sections recover $A$:
$$\Gamma(|\Spec A|, \mathcal{O}_A) \cong A.$$
\end{enumerate}
\end{maintheorem}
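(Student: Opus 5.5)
The key point is that, in Deitmar's spectrum of the pointed monoid $M := A(1_+)$, every open cover of a basic open $D(f)$ is trivial. This lets me avoid almost all of the usual sheaf-gluing arguments. I would define $\mathcal{O}_A$ objectwise on $\Gamma^{\mathrm{op}}$. For each $n_+$, the assignment $U \mapsto \mathcal{O}_A(U)(n_+)$ is the sheaf of sets whose sections are compatible families $(s_{\mathfrak{p}})_{\mathfrak{p}\in U}$ with $s_{\mathfrak{p}} \in A_{\mathfrak{p}}(n_+)$, each locally of the form $a/f$ with $a \in A(n_+)$ and $f \in M$. I would first check that the localization of Theorem~\ref{thm:main-localization} is computed levelwise as a filtered colimit: $S^{-1}A(n_+) = \colim_{f\in S} A(n_+)[f^{-1}]$, with elements $a/f$ and the monoid $M$ acting on $A(n_+)$ through the algebra multiplication. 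The $\Gamma$-set structure maps and the multiplication of $A$ then pass to these families, so $\mathcal{O}_A$ is a sheaf of $\FF$-algebras. Moreover, limits and filtered colimits of $\Gamma$-sets are computed levelwise, so all three claims can be checked one level $n_+$ at a time.

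For (i), the stalk at $\mathfrak{p}$ is $\colim_{\mathfrak{p}\in U}\mathcal{O}_A(U)$. Since the $D(f)$ with $f \notin \mathfrak{p}$ are cofinal among neighborhoods of $\mathfrak{p}$, and filtered colimits commute with the levelwise description, the stalk is $\colim_{f\notin\mathfrak{p}} A_f = A_{\mathfrak{p}}$. Here I use that $A_{\mathfrak{p}} = (M\setminus\mathfrak{p})^{-1}A$ is the filtered colimit of the $A_f$, which follows from the universal property in Theorem~\ref{thm:main-localization}. Injectivity and surjectivity of the comparison map are then the standard germ arguments.

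For (ii), I would use the monoid fact that $D(f)$ has a unique closed point, namely $\mathfrak{m}_f$, the preimage in $M$ of the complement of the units of $M_f$. Concretely, $\mathfrak{m}_f = \{x : x \text{ does not divide a power of } f\}$, and every prime in $D(f)$ is contained in $\mathfrak{m}_f$. The only open subset of $D(f)$ containing $\mathfrak{m}_f$ is $D(f)$ itself, because opens are closed under generization. Hence any section over $D(f)$ is determined by its germ at $\mathfrak{m}_f$, and that germ is already represented on all of $D(f)$. This gives $\mathcal{O}_A(D(f)) \cong \mathcal{O}_{A,\mathfrak{m}_f} \cong A_{\mathfrak{m}_f}$. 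It remains to identify $A_{\mathfrak{m}_f}$ with $A_f$. The complement $M\setminus\mathfrak{m}_f$ consists of the divisors of powers of $f$, and these become units in $A_f$, so the universal property yields $A_{\mathfrak{m}_f} \cong A_f$.

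Part (iii) is then the case $f = 1$: $|\Spec A| = D(1)$ and $A_1 = A$.

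The main obstacle is making the sheaf definition precise enough that the map $\mathcal{O}_A(D(f)) \to \mathcal{O}_{A,\mathfrak{m}_f}$ is an isomorphism. Injectivity is the delicate part: two sections agreeing at $\mathfrak{m}_f$ agree on an open neighborhood of it, which must be all of $D(f)$. A second point needing care is the levelwise description of $S^{-1}A(n_+)$, with the equivalence relation $a/s \sim b/t$ if and only if $u\cdot t\cdot a = u\cdot s\cdot b$ for some $u\in S$. That description is what allows colimits and sections to be compared elementwise.
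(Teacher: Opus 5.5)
Your proof is correct, but part (ii) takes a different route from the paper's. Parts (i) and (ii) also need to be ordered so that the argument is not circular.

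\textbf{How the paper argues.} It proves (i) directly by the germ argument. A germ is sent to its value at $\mathfrak{p}$. Surjectivity comes from the constant section $a/g$ on $D(g)$. Injectivity comes from shrinking to common quotient representations and using a witness $k\notin\mathfrak{p}$. For (ii), the paper proves injectivity of $A_f\to\mathcal{O}_A(D(f))$ through the ideal $I=\{h : hf^na=hf^mb\}$ and the monoid Nullstellensatz (Proposition~\ref{thm:monoid-nullstellensatz}). For surjectivity it notes that $\mathfrak{c}=\bigcup_{\mathfrak{p}\in D(f)}\mathfrak{p}$ has no neighborhood inside $D(f)$ other than $D(f)$. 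Hence every section is a single quotient $a/g$ with $D(f)\subseteq D(g)$, and the Nullstellensatz is used again to write $f^n=bg$.

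\textbf{How your route differs.} Your $\mathfrak{m}_f$ is exactly this $\mathfrak{c}$, but you use it more fully. Since $D(f)$ is the smallest open neighborhood of $\mathfrak{m}_f$, the germ map $\mathcal{O}_A(D(f))\to\mathcal{O}_{A,\mathfrak{m}_f}$ is bijective for purely formal reasons. That gives injectivity and surjectivity at once. What remains, $A_{\mathfrak{m}_f}\cong A_f$, is a saturation statement. Every element outside $\mathfrak{m}_f$ divides a power of $f$, so it is already a unit in $A_f$, and the universal property finishes the job.

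This removes the Nullstellensatz entirely. It also makes explicit why $\mathfrak{c}$ is a prime lying in $D(f)$ and containing every prime of $D(f)$: if $x\in\mathfrak{p}$ and $xy=f^n$, then $f\in\mathfrak{p}$. The paper only asserts these properties of $\mathfrak{c}$. The paper's injectivity argument, by contrast, stays closer to the classical ring-theoretic proof.

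\textbf{The circularity to fix.} As written, your (i) identifies the stalk with $\colim_{f\notin\mathfrak{p}}A_f$. That already presupposes $\mathcal{O}_A(D(f))\cong A_f$ compatibly with restrictions, which is (ii). But your (ii) is deduced from (i) at the point $\mathfrak{m}_f$. Make the direct germ argument you mention the actual proof of (i), and only then derive (ii).

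\textbf{The nilpotent case.} Treat nilpotent $f$ separately. There $D(f)=\emptyset$, $\mathfrak{m}_f$ is not an ideal, and both sides are the zero algebra.
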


\begin{maintheorem}[Anti-Equivalence]\label{thm:main-antiequiv}
The contravariant functor
$$\Spec: \FAlg^{\mathrm{op}} \to \AffSch_{\mathbb{F}_1}$$
establishes an anti-equivalence of categories between commutative $\mathbb{F}_1$-algebras and absolute affine schemes (defined as locally $\mathbb{F}_1\mathrm{Alg}$-ed spaces isomorphic to $\Spec A$ for some $A$).
\end{maintheorem}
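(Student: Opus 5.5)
The plan is the classical one: build a global-sections functor, show it is inverse to $\Spec$ on morphisms, and get essential surjectivity for free from the definition of $\AffSch_{\mathbb{F}_1}$. Since absolute affine schemes are by definition those locally $\FAlg$-ed spaces isomorphic to some $\Spec A$, only full faithfulness needs proof. Concretely, for commutative $\mathbb{F}_1$-algebras $A,B$ I would show that
$$\Hom_{\AffSch_{\mathbb{F}_1}}(\Spec B, \Spec A) \longrightarrow \Hom_{\FAlg}(A,B), \qquad (\phi,\phi^\#)\mapsto \Gamma(\phi^\#),$$
is a bijection inverse to $\Spec$ on morphisms. Here $\Gamma(\phi^\#)$ is composed with the identifications $\Gamma(\Spec A,\mathcal{O}_A)\cong A$ and $\Gamma(\Spec B,\mathcal{O}_B)\cong B$ from Theorem \ref{thm:main-structure-sheaf}(iii).

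\textbf{Functoriality of $\Spec$.} Let $u:A\to B$ be a morphism. It restricts to a map of pointed monoids $u(1_+):A(1_+)\to B(1_+)$. Define $|\phi|(\mathfrak{q}) = u(1_+)^{-1}(\mathfrak{q})$; this is prime in Deitmar's sense, because only multiplicative conditions are imposed (Remark \ref{rem:prime}). It is continuous since $|\phi|^{-1}(D(f)) = D(u(f))$. On basic opens, the composite $A\to B\to B_{u(f)}$ sends $f$ to a unit, so the universal property of Theorem \ref{thm:main-localization} yields $A_f\to B_{u(f)}$. Via Theorem \ref{thm:main-structure-sheaf}(ii) this gives $\mathcal{O}_A(D(f))\to \mathcal{O}_B(D(u(f)))$, compatible with restrictions by uniqueness in the universal property, and it extends to a sheaf map $\phi^\#$. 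On stalks, the induced $A_{\mathfrak{p}}\to B_{\mathfrak{q}}$ (with $\mathfrak{p} = u^{-1}\mathfrak{q}$) is local: an element of $A_\mathfrak{p}(1_+)$ whose image is a unit has numerator outside $\mathfrak{q}$, hence outside $\mathfrak{p}$. Functoriality again follows from uniqueness in the universal property.

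\textbf{Global sections inverts $\Spec$.} Starting from $u$, the global-sections map of $\Spec(u)$ is the localization map at $f=1$, which is $u$ itself. So $\Gamma\circ\Spec=\mathrm{id}$ on morphisms.

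\textbf{$\Spec$ of global sections recovers the morphism (main obstacle).} Given a morphism $(\phi,\phi^\#)$ of locally $\FAlg$-ed spaces, set $u=\Gamma(\phi^\#)$. For $\mathfrak{q}\in|\Spec B|$ and $\mathfrak{p}=\phi(\mathfrak{q})$ there is a commutative square
$$A\to A_\mathfrak{p}\to B_\mathfrak{q}, \qquad A \xrightarrow{u} B\to B_\mathfrak{q},$$
using the stalk identifications of Theorem \ref{thm:main-structure-sheaf}(i). Locality of $A_\mathfrak{p}\to B_\mathfrak{q}$ means that $f\notin\mathfrak{p}$ if and only if the image of $f$ in $B_\mathfrak{q}(1_+)$ is a unit, if and only if $u(f)\notin\mathfrak{q}$. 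Hence $\mathfrak{p} = u(1_+)^{-1}\mathfrak{q}$, so $\phi=|\Spec u|$.

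For the sheaf maps, both $\phi^\#$ and $\Spec(u)^\#$ restricted to $D(f)$ are maps $A_f\to B_{u(f)}$ extending $u$. The universal property of Theorem \ref{thm:main-localization} forces them to agree on every basic open. Since the $D(f)$ form a basis and $\mathcal{O}$ is a sheaf, they agree everywhere.

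The delicate point is the locality argument above. It requires knowing that the units of $A_\mathfrak{p}(1_+)$ are exactly the fractions $a/s$ with $a\notin\mathfrak{p}$, and that the maximal ideal of the local $\mathbb{F}_1$-algebra $A_\mathfrak{p}$ pulls back to $\mathfrak{p}$. I would verify both directly from the level-$1_+$ description of the localization, namely $(S^{-1}A)(1_+) = S^{-1}(A(1_+))$. I expect this to follow from the construction in Theorem \ref{thm:main-localization}, because localization is computed levelwise.
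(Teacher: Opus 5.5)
Your proposal is correct and follows essentially the same route as the paper: full faithfulness via global sections (Proposition~\ref{prop:morphism}), where locality at the first level forces $\phi(\mathfrak{q}) = u^{-1}(\mathfrak{q})$, uniqueness in the universal property of localization forces the sheaf map, and essential surjectivity holds by definition. The differences are minor. You pin down $\phi^\#$ on the basic opens $D(f)$ and use the sheaf axiom, whereas the paper pins it down on stalks. You also explicitly check that $\Spec(u)$ is local at the first level, which the paper's proof of Proposition~\ref{prop:morphism}(1) leaves implicit.
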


Together, these results establish that absolute affine schemes behave like classical affine schemes, with the category $\FAlg$ playing the role of $\CRing$. This framework strictly generalizes both Deitmar's monoid schemes and classical schemes, going beyond toric varieties that typically arise in other approaches of $\FF$-geometry (see Example \ref{ex:fixed-points}).

\subsection*{Outline of the paper}

\Cref{sec:background} reviews the necessary background on $\Gamma$-sets, $\mathbb{F}_1$-algebras, and Deitmar's monoid schemes. \Cref{sec:localization} develops the theory of localization for $\mathbb{F}_1$-algebras, establishing the universal property and prove Theorem \ref{thm:main-localization}. \Cref{sec:sheaves} introduces sheaves of $\mathbb{F}_1$-algebras, sheafification, and stalks. \Cref{sec:spectrum} defines the prime spectrum $\Spec A$ and proves Theorem \ref{thm:main-structure-sheaf}. \Cref{sec:antiequiv} establishes the functoriality of $\Spec$ and proves the anti-equivalence (Theorem \ref{thm:main-antiequiv}). \Cref{sec:applications} discusses base change and geometric applications.

\subsection*{Acknowledgments}

This paper is based on part of the author's Ph.D. thesis \cite{xu-thesis}. The author thanks the guidance of his academic advisor Caterina Consani.

\section{Background}\label{sec:background}

The concept of a geometry over the hypothetical ``field with one element'', can be traced back to the work of Jacques Tits, through the study of spherical buildings \cite{Tits}. Since then, a number of approaches have been proposed to make this notion precise, notably Deitmar's monoid schemes \cite{deitmar2006schemesf1}, Lorscheid's blueprints \cite{blueprint}, Connes-Consani's $\Gamma$-sets \cite{connes2016absolute, affine}.

In this section, we review the necessary background on $\Gamma$-sets and $\mathbb{F}_1$-algebras, as well as Deitmar's theory of monoid schemes.

\subsection{$\Gamma$-sets and $\mathbb{F}_1$-algebras}\label{subsec:gamma-recap}

We briefly review the basic definitions from \cite{connes2016absolute, dundas2012local}. 

\begin{definition}
The category $\Gamma^{\mathrm{op}}$ has objects $n_+ = \{0,1,\ldots,n\}$ (pointed finite sets with basepoint $0$) for $n \geq 0$, and morphisms are basepoint-preserving maps. A \emph{$\Gamma$-set} is a pointed functor $F: \Gamma^{\mathrm{op}} \to \Set_*$. We say $F(n_+)$ is the \emph{$n$-th level} of $F$.

We denote by $\GSet$ the category of $\Gamma$-sets.
\end{definition}

The category $\GSet$ is a symmetric monoidal closed category with:
\begin{itemize}
\item \emph{Monoidal product:} The smash product $\wedge$ (Day convolution)
\item \emph{Unit:} The ``field with one element'' $\FF$, given by the identity functor $\FF(n_+) = n_+$
\item \emph{Internal hom:} $$\underline{\Hom}_{\GSet}(M,N)(n_+) := \Hom_{\GSet}(M, N_{n_+ \wedge -})$$
where $N_{n_+ \wedge -}$ is the $\Gamma$-set given by $(N_{n_+ \wedge -})(k_+) := N(n_+ \wedge k_+)$.
\end{itemize}

Two fundamental functors embed classical algebraic structures into $\GSet$:

\begin{definition}[Eilenberg-MacLane functor]
For a commutative monoid $M$, the \emph{Eilenberg-MacLane object} $HM$ is the $\Gamma$-set with:
$$HM(n_+) = M^{\oplus n} = \{(m_1,\ldots,m_n) : m_i \in M\}$$
and structure maps defined by componentwise summation:
$$HM(f): M^{\oplus n} \to M^{\oplus k}, \quad (m_1, \ldots, m_n) \mapsto \left(\sum_{f(i)=1} m_i, \ldots, \sum_{f(i)=k} m_i\right).$$
The assignment $M \mapsto HM$ defines a fully faithful functor $H: \CMonoid \to \GSet$, right adjoint to the extension of scalars functor $-\otimes_{\FF} \ZZ$. \cite{xu-hyper}
\end{definition}

\begin{definition}[Spherical monoid functor]
For a pointed set $X$ with basepoint $*$, the \emph{spherical monoid object} $\mathbb{S}X$ is the $\Gamma$-set with:
$$\mathbb{S}X(n_+) = X \wedge n_+$$
the $n$-fold smash power. The assignment $X \mapsto \mathbb{S}X$ defines a fully faithful functor $\mathbb{S}: \Set_* \to \GSet$, left adjoint to the evaluation functor $-(1_+)$.
\end{definition}

From the above examples, one can interpret the first level of a $\Gamma$-set $F$ as the \emph{underlying set} of $F$, with higher levels incorporating information of taking sums. In particular, in an Eilenberg-Maclane object $HM$, the $n$-th level contains $n$-tuples of elements of $M$, encoding how to take the sum of $n$ elements, whereas in a spherical monoid object $\mathbb{S}X$, the $n$-th level contains only the information of trivial sums, i.e. $x+0+...+0=x$, thus encoding the fact that $X$ is merely a pointed set (with base point $0$).

\begin{definition}[$\mathbb{F}_1$-algebras]\label{def:f1-algebra-recap}
An \emph{$\mathbb{F}_1$-algebra} is a monoid object in $(\GSet, \wedge, \FF)$. Concretely, it consists of:
\begin{itemize}
\item A $\Gamma$-set $A$
\item Multiplication $\mu: A \wedge A \to A$
\item Unit $\eta: \FF \to A$
\end{itemize}
satisfying associativity and unit axioms. An $\mathbb{F}_1$-algebra is \emph{commutative} if multiplication commutes with the symmetry isomorphism.

We denote by $\FAlg$ the category of commutative $\mathbb{F}_1$-algebras.
\end{definition}

\begin{remark}
The multiplication at the first level $\mu_{1,1}: A(1_+) \times A(1_+) \to A(1_+)$ equips $A(1_+)$ with the structure of a pointed commutative monoid. The basepoint is the image of $0 \in \FF(1_+) = \{0,1\}$ under the unit map $\eta: \FF(1_+) \to A(1_+)$, and serves as the absorbing element for multiplication.
\end{remark}

The functors $H$ and $\mathbb{S}$ extend to $\mathbb{F}_1$-algebras:

\begin{proposition}[\cite{connes2016absolute}]\label{prop:functors-recap}
\begin{enumerate}[label=(\roman*)]
\item For a commutative semiring $R$, the Eilenberg-MacLane object $HR$ carries a natural $\mathbb{F}_1$-algebra structure with multiplication given by $(x_i) \wedge (y_j) \mapsto (x_i y_j)_{i,j}$. This defines a fully faithful functor $H: \mathrm{CSemiring} \to \FAlg$.

\item For a pointed commutative monoid $M$, the spherical monoid object $\mathbb{S}M$ carries a natural $\mathbb{F}_1$-algebra structure. This defines a fully faithful functor $\mathbb{S}: \CMonoid_* \to \FAlg$.
\end{enumerate}
\end{proposition}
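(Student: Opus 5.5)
Both parts rest on the same tool. A monoid structure $A\wedge A\to A$ on a $\Gamma$-set is, by the universal property of Day convolution, the same thing as a family of maps $\mu_{n,m}\colon A(n_+)\wedge A(m_+)\to A(n_+\wedge m_+)$. This family must be natural in both variables and satisfy the associativity and unit identities. I will therefore write down such families explicitly, check naturality and the axioms levelwise, and verify full faithfulness by comparing morphisms at level $1_+$.

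For (i), I define $\mu_{n,m}\bigl((x_i),(y_j)\bigr)=(x_iy_j)_{(i,j)}$, indexed by $n_+\wedge m_+\cong (nm)_+$. The basepoint, the zero tuple, goes to the zero tuple because $0$ is absorbing in a semiring. For naturality, take $f\colon n_+\to k_+$. Applying $HR(f)$ to $(x_i)$ and then multiplying gives the $(a,j)$ entry $\bigl(\sum_{f(i)=a}x_i\bigr)y_j$. Multiplying first and then applying $HR(f\wedge\mathrm{id})$ gives $\sum_{f(i)=a}x_iy_j$. These agree by distributivity, and the other variable is symmetric. Associativity, the unit $\eta(1)=1\in R=HR(1_+)$, and commutativity all follow from the corresponding identities in $R$. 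A semiring map $\varphi$ induces $H\varphi$ levelwise, and that is multiplicative. For full faithfulness, recall that a $\Gamma$-set map $\psi\colon HR\to HR'$ is determined by $\psi_1$. This holds because the maps $\rho_j\colon n_+\to 1_+$ (sending $j\mapsto 1$ and everything else to $0$) jointly detect $HR'(n_+)=R'^n$. Additivity of $\psi_1$ comes from naturality with respect to the fold map $2_+\to1_+$. So $\psi_1$ is a monoid map, and it is multiplicative and unital because $\psi$ is an algebra map. This reduces (i) to full faithfulness of $H\colon\CMonoid\to\GSet$, which was stated earlier.

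For (ii), $\mathbb{S}M(n_+)=M\wedge n_+$ has elements $m\wedge i$, and I set $\mu_{n,m}\bigl((a\wedge i),(b\wedge j)\bigr)=ab\wedge(i,j)$. This is basepoint-preserving because $0$ is absorbing in $M$ and the basepoint is absorbing in $n_+\wedge m_+$. Naturality is immediate since maps act only on the set coordinate. The unit is $\eta(i)=1\wedge i$. For full faithfulness, use the stated adjunction $\mathbb{S}\dashv -(1_+)$. A $\Gamma$-map $\mathbb{S}M\to\mathbb{S}N$ corresponds to a pointed map $M\to\mathbb{S}N(1_+)=N$, so it is determined by its level-one component $\varphi$. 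Compatibility with $\mu_{1,1}$ and $\eta$ then says $\varphi$ is a pointed monoid map, and conversely $\mathbb{S}\varphi$ is multiplicative.

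The main obstacle is not the computations. It is justifying that these external pairings $\mu_{n,m}$ really assemble to a monoid object for $\wedge$, including the coherence with the symmetry in the commutative case. I would handle this by citing the standard description of Day convolution maps as natural transformations over $\Gamma^{\mathrm{op}}\times\Gamma^{\mathrm{op}}$, and checking associativity only on generators.
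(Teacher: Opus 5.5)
Your proposal is correct. The paper gives no proof of its own here; it only cites Connes--Consani. Your levelwise argument is the standard one behind that citation:
\begin{itemize}
\item you encode the multiplication as natural pairings $A(n_+)\wedge A(m_+)\to A(n_+\wedge m_+)$ via Day convolution;
\item you check naturality for $HR$ by distributivity;
\item you get full faithfulness by showing that algebra maps out of $HR$ and $\mathbb{S}M$ are determined by their level-$1_+$ component, using the projections $\rho_j$ and the fold map $2_+\to 1_+$ for $HR$, and the adjunction $\mathbb{S}\dashv -(1_+)$ for $\mathbb{S}M$.
\end{itemize}
One small remark: your appeal to the previously stated full faithfulness of $H$ on commutative monoids is redundant, since your $\rho_j$ and fold-map argument already proves it.
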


These assemble into an adjunction diagram:

\begin{center}
\begin{tikzcd}
\CMonoid_* \arrow[r,"\mathbb{S}"] \arrow[rr, bend left, "\text{free}"]  &  \FAlg \arrow[r,"-\otimes_{\mathbb{F}_1} \mathbb{Z}"] & \CRing  \\
\CMonoid_*    &  \FAlg \arrow[l,"-(1_+)"]  & \CRing \arrow[l,"H"] \arrow[ll, bend left, "\text{forget}"]
\end{tikzcd}
\end{center}

The horizontal arrows form adjoint pairs: $\mathbb{S} \dashv -(1_+)$  and $(-\otimes_{\FF}\ZZ) \dashv H$. The bent arrows represent the composite functors, with the top path providing the (free) integral monoid ring on a pointed monoid, and the bottom path extracting the monoid of units, again an adjoint pair. For details, we refer to \cite{xu-hyper}, see also \cite{beardsley2025}.

\subsection{Deitmar's monoid schemes}\label{subsec:deitmar}

We now provide a detailed review of the theory of monoid schemes following Deitmar \cite{deitmar2006schemesf1} and Lorscheid \cite{everyone}, using pointed commutative monoids as the basic algebraic objects and develops a parallel to classical scheme theory.

\subsubsection{Localization of monoids}

\begin{definition}\label{def:monoid-localization}
Let $M$ be a (pointed) commutative monoid (written multiplicatively with absorbing element $0$), and let $S \subseteq M$ be a multiplicatively closed subset (i.e., $1 \in S$ and $S \cdot S \subseteq S$). The \emph{localization} of $M$ with respect to $S$ is:
$$S^{-1}M := \left\{\frac{a}{s} \,\bigg|\, a \in M, s \in S\right\} \bigg/ \sim$$
where the equivalence relation is:
$$\frac{a}{s} \sim \frac{b}{t} \quad\iff\quad \exists h \in S: hta = hsb \text{ in } M.$$
\end{definition}

\begin{proposition}[\cite{deitmar2006schemesf1}]\label{prop:monoid-localization-univ}
The localization $S^{-1}M$ satisfies the following universal property: the natural map $\iota: M \to S^{-1}M$ (given by $a \mapsto \frac{a}{1}$) sends elements of $S$ to units in $S^{-1}M$, and for any monoid homomorphism $\varphi: M \to N$ such that $\varphi(S) \subseteq N^\times$, there exists a unique homomorphism $\overline{\varphi}: S^{-1}M \to N$ with $\overline{\varphi} \circ \iota = \varphi$.
\end{proposition}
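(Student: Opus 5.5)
The proof is the standard verification for monoid localization. The plan has three parts: check that $\sim$ is an equivalence relation, put a monoid structure on $S^{-1}M$ for which $\iota$ is a homomorphism inverting $S$, and then prove existence and uniqueness of $\overline{\varphi}$.

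For the relation, reflexivity and symmetry are immediate from the definition. For transitivity, suppose $\frac{a}{s}\sim\frac{b}{t}$ via $h$ and $\frac{b}{t}\sim\frac{c}{u}$ via $k$, so $hta=hsb$ and $kub=ktc$. Multiplying the first equation by $ku$ and the second by $hs$ and using commutativity gives
\[
(hkt)\,u\,a = hks\,ub = hks\,tc = (hkt)\,s\,c .
\]
The element $hkt$ lies in $S$ because $S$ is multiplicatively closed, so it witnesses $\frac{a}{s}\sim\frac{c}{u}$. This is the one place where the auxiliary factor $h$ is essential, since $M$ need not be cancellative. It is also the step I expect to need the most care, though it is routine.

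Next define $\frac{a}{s}\cdot\frac{b}{t}=\frac{ab}{st}$. To see this is well defined, suppose $\frac{a}{s}\sim\frac{a'}{s'}$ via $h$. Multiplying $hs'a=hsa'$ by $tb$ shows $\frac{ab}{st}\sim\frac{a'b}{s't}$, and the other variable is symmetric. Associativity and commutativity are inherited from $M$. The unit is $\frac11$ and the absorbing element is $\frac01$; note that $\frac{0}{s}\sim\frac01$ since $1\cdot 0 = s\cdot 0$. The map $\iota$ is clearly a pointed monoid homomorphism. For $s\in S$ we have $\frac s1\cdot\frac1s=\frac ss\sim\frac11$ via $h=1$, so $\iota(S)$ consists of units.

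For existence of $\overline{\varphi}$, set $\overline{\varphi}(\frac as)=\varphi(a)\varphi(s)^{-1}$. To see this is well defined, suppose $hta=hsb$. Applying $\varphi$ gives
\[
\varphi(h)\varphi(t)\varphi(a)=\varphi(h)\varphi(s)\varphi(b),
\]
and since $\varphi(h),\varphi(s),\varphi(t)$ are units in $N$ we can cancel them to get $\varphi(a)\varphi(s)^{-1}=\varphi(b)\varphi(t)^{-1}$. It is then direct that $\overline{\varphi}$ is multiplicative, preserves $1$ and $0$, and satisfies $\overline{\varphi}\circ\iota=\varphi$.

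For uniqueness, let $\psi$ be any homomorphism with $\psi\circ\iota=\varphi$. Since $\frac as=\iota(a)\,\iota(s)^{-1}$, we get
\[
\psi\!\left(\tfrac as\right)=\varphi(a)\,\psi(\iota(s))^{-1}=\varphi(a)\varphi(s)^{-1}.
\]
Here we use that a monoid homomorphism sends the inverse of a unit to the inverse of its image. Hence $\psi=\overline{\varphi}$.
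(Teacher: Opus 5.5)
Your proof is correct and complete. The paper gives no proof of this proposition (it is cited from Deitmar), but its proof of the $\mathbb{F}_1$-analogue, Theorem~\ref{thm:localization-universal}, follows your route exactly: it defines $\overline{\varphi}(\frac{a}{s})=\varphi(a)\varphi(s)^{-1}$, cancels the units $\varphi(h),\varphi(s),\varphi(t)$ for well-definedness, and uses the factorization $\frac{a}{s}=\iota(a)\,\iota(s)^{-1}$ for uniqueness, while you also carefully verify transitivity of $\sim$ (with the correct witness $hkt\in S$) and well-definedness of the product, which the paper dismisses as straightforward.
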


\subsubsection{Prime ideals and spectrum}

\begin{definition}\label{def:prime-ideal-monoid}
Let $M$ be a pointed commutative monoid. An \emph{ideal} $I \subseteq M$ is a nonempty subset such that $MI \subseteq I$. An ideal $I$ is \emph{prime} if:
\begin{enumerate}[label=(\roman*)]
\item $I \neq M$ (properness)
\item For all $a, b \in M$: if $ab \in I$, then $a \in I$ or $b \in I$
\end{enumerate}

The \emph{prime spectrum} $\Spec M$ is the set of all prime ideals of $M$, equipped with the topology whose basic open sets are:
$$D(f) := \{\mathfrak{p} \in \Spec M \mid f \notin \mathfrak{p}\}$$
for $f \in M$. The closed sets are:
$$V(I) := \{\mathfrak{p} \in \Spec M \mid I \subseteq \mathfrak{p}\}$$
for ideals $I \subseteq M$. For further reference in this paper, we denote it by $\Spec_{Deitmar}$, Deitmar's prime spectrum functor. We introduce the following facts without proof, for details see \cite{deitmar2006schemesf1}.
\end{definition}

\begin{proposition}\label{prop:monoid-spectrum-properties}
Let $M$ be a pointed commutative monoid. Then:
\begin{enumerate}[label=(\roman*)]
\item There exists a unique maximal ideal $\mathfrak{m} = M \setminus M^\times$ (the non-units).
\item If $I, J$ are ideals, then $V(I) \cup V(J) = V(I \cap J)$ and $\bigcap_{\alpha} V(I_\alpha) = V(\bigcup_\alpha I_\alpha)$.
\item If $f, g \in M$, then $D(f) \cap D(g) = D(fg)$.
\item The basic open sets $\{D(f)\}_{f \in M}$ form a basis for the topology.
\end{enumerate}
\end{proposition}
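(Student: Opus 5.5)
I will verify the four items directly from Definition \ref{def:prime-ideal-monoid}, using only the multiplicative structure of $M$ and elementary set theory. Throughout I use the convention, implicit in the definition of $\Spec M$, that prime ideals are proper and that $0 \in I$ for every ideal $I$ (since $I$ is nonempty and $0 \cdot a = 0$).

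For (i), the first step is to show that $\mathfrak{m} = M \setminus M^\times$ is an ideal. If $a \in \mathfrak{m}$ and $x \in M$, suppose $xa$ were a unit with inverse $u$; then $(ux)a = 1$, so $a$ would be a unit, a contradiction. Hence $xa \in \mathfrak{m}$. Moreover $0 \in \mathfrak{m}$ provided $0 \neq 1$; in the degenerate case $M = \{0\}$ there are no proper ideals and the statement is vacuous, or should be read accordingly. Next, every proper ideal $I$ avoids units: if $u \in I$ is a unit, then $1 = u^{-1}u \in I$, so $I = M$. Thus $I \subseteq \mathfrak{m}$, and $\mathfrak{m}$ is the unique maximal ideal. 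As a bonus, $\mathfrak{m}$ is prime, since a product $ab$ is a unit iff both $a$ and $b$ are units.

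For (ii), the key observation is that $V(I \cap J) = V(I) \cup V(J)$, which is where primality is used.
\begin{itemize}[label=$\circ$]
\item The inclusion $\supseteq$ is trivial.
\item For $\subseteq$: suppose $\mathfrak{p} \supseteq I \cap J$ but there are $a \in I \setminus \mathfrak{p}$ and $b \in J \setminus \mathfrak{p}$. Then $ab \in IJ \subseteq I \cap J \subseteq \mathfrak{p}$, because ideals are closed under multiplication by $M$. This contradicts primality.
\end{itemize}
Note that $I \cap J$ is nonempty since it contains $0$, and it is closed under the $M$-action, so it is an ideal. The identity $\bigcap_\alpha V(I_\alpha) = V(\bigcup_\alpha I_\alpha)$ is immediate, and a union of ideals is again an ideal. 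These facts, together with $V(M) = \emptyset$ and $V(\{0\}) = \Spec M$, show that the sets $V(I)$ are the closed sets of a topology.

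For (iii), $\mathfrak{p} \in D(fg)$ means $fg \notin \mathfrak{p}$. By primality together with the ideal property, this holds iff $f \notin \mathfrak{p}$ and $g \notin \mathfrak{p}$.

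For (iv), I need to show that each open set $\Spec M \setminus V(I)$ equals $\bigcup_{f \in I} D(f)$. This holds because $\mathfrak{p} \notin V(I)$ iff some $f \in I$ satisfies $f \notin \mathfrak{p}$. In particular $D(f) = \Spec M \setminus V(Mf)$ is itself open. Combining this with (iii), the $D(f)$ are closed under finite intersections and cover $\Spec M$ (take $D(1)$), so they form a basis.

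The only genuinely delicate point is the bookkeeping around $0$ and the zero monoid in (i). The rest are one-line verifications.
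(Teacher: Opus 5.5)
The paper gives no proof of this proposition: it introduces these facts ``without proof'' and defers to Deitmar, so there is no argument in the paper to compare against. Your direct verification is correct and is the standard one.

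Two points in your argument are worth keeping. In (ii), you use primality exactly where it is needed: if $a \in I\setminus\mathfrak{p}$ and $b \in J\setminus\mathfrak{p}$, then $ab \in I\cap J \subseteq \mathfrak{p}$. In (iv), you address something the paper glosses over. Definition~\ref{def:prime-ideal-monoid} describes the topology twice, once by the basis $\{D(f)\}$ and once by the closed sets $V(I)$. Your identities $D(f) = \Spec M \setminus V(Mf)$ and $\Spec M \setminus V(I) = \bigcup_{f\in I} D(f)$ are precisely what shows these two descriptions define the same topology.

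One small correction concerns the zero monoid $M=\{0\}$, where $0=1$. There item (i) is not vacuous but false. Since $M^\times = M$, the set $M\setminus M^\times=\emptyset$ is not an ideal under the nonemptiness requirement. Moreover, $M$ has no proper ideal at all, hence no maximal one. So the statement needs the hypothesis $0\neq 1$, which is exactly what your argument uses when you place $0$ in $\mathfrak{m}$.
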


\begin{definition}\label{def:radical-monoid}
For an ideal $I \subseteq M$, its \emph{radical} is:
$$\sqrt{I} := \{a \in M \mid \exists n \geq 1: a^n \in I\}.$$
\end{definition}

It is straightforward to check that the following properties hold:

\begin{proposition}\label{thm:monoid-nullstellensatz}
    Let $M$ be a pointed commutative monoid. Then:
\begin{enumerate}[label=(\roman*)]
\item If $V(I) \subseteq V(J)$, then $\sqrt{J} \subseteq \sqrt{I}$.
\item If $V(I) \subseteq V(\langle f \rangle)$ for $f \in M$, then $f \in \sqrt{I}$.
\end{enumerate}
\end{proposition}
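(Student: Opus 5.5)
The key tool is the monoid-theoretic Nullstellensatz: every element $a$ outside a prime-closed family can be separated from an ideal by a prime. Concretely, I will first prove the following.

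\emph{Separation lemma.} Let $I$ be an ideal of $M$ and $a \notin \sqrt{I}$. Then there is a prime $\mathfrak p \supseteq I$ with $a \notin \mathfrak p$.

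To prove it, set $S = \{a^n : n \ge 0\}$, which is disjoint from $I$ since $a \notin \sqrt{I}$. Let $\mathfrak p = \{x \in M \mid xS \cap I \neq \emptyset\}$, i.e. $x\in\mathfrak p$ iff $xa^n \in I$ for some $n$. The checks are as follows.
\begin{enumerate}[label=(\alph*)]
\item $\mathfrak p$ contains $I$, hence contains $0$ and is nonempty.
\item $\mathfrak p$ is closed under multiplication by $M$.
\item $\mathfrak p$ is proper, because $1 \notin \mathfrak p$ (as $S \cap I = \emptyset$).
\item $a \notin \mathfrak p$, since otherwise $a^{n+1}\in I$.
\end{enumerate}
Primality is the step I expect to be the main obstacle, since this is exactly where the monoid setting differs from rings. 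Here $\mathfrak p$ need not be prime in general. So instead I will take $\mathfrak p$ to be a maximal element, via Zorn's lemma, among ideals containing $I$ and disjoint from $S$; chains have unions that are again such ideals.

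For maximality to give primality, suppose $x,y\notin\mathfrak p$ but $xy\in\mathfrak p$. Then $\mathfrak p\cup xM$ meets $S$, so $a^m = p$ or $a^m = xu$ for some $u$; likewise $a^k = q$ or $a^k = yv$. The options $a^m=p\in\mathfrak p$ and $a^k=q\in\mathfrak p$ are excluded by disjointness. Hence $a^{m+k}=xyuv\in\mathfrak p$, a contradiction. Note that in a monoid the ideal generated by $\mathfrak p$ and $x$ is just $\mathfrak p\cup xM$, so the argument uses only multiplicative closure. This gives the lemma.

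With the lemma in hand, both parts follow quickly.

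\emph{Part (i).} Take $a\in\sqrt J$ and suppose $a\notin\sqrt I$. The lemma gives a prime $\mathfrak p\in V(I)$ with $a\notin\mathfrak p$. But $V(I)\subseteq V(J)$, so $\mathfrak p\supseteq J$. Since $a^n\in J\subseteq\mathfrak p$ for some $n$, primality forces $a\in\mathfrak p$, a contradiction.

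\emph{Part (ii).} Apply (i) with $J=\langle f\rangle = fM$. This gives $f\in\sqrt{\langle f\rangle}\subseteq\sqrt I$.
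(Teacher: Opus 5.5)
Your argument is correct. The paper gives no proof of this proposition; it only says the properties are ``straightforward to check''. So your write-up supplies what the paper omits, by transporting the standard ring-theoretic argument. Zorn's lemma yields an ideal $\mathfrak p$ maximal among those containing $I$ and disjoint from $S=\{a^n\}_{n\ge 0}$. Your primality check is sound, because $\mathfrak p\cup xM$ really is the ideal generated by $\mathfrak p$ and $x$.

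Two small clean-ups are worth making. First, your items (a)--(d) concern the candidate $\{x \mid xa^n\in I \text{ for some } n\}$, which you then discard. For the Zorn-maximal $\mathfrak p$, say directly that it is proper and misses $a$ because $1,a\in S$. Second, note that $I$ itself lies in the Zorn family because $1\notin I$ (otherwise $I=M\ni a$).

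In the monoid setting, however, choice is unnecessary, which is presumably why the paper calls this straightforward. If $a\notin\sqrt I$, consider
\[
\mathfrak p_a:=\{x\in M \mid x\nmid a^n \text{ for all } n\ge 0\},
\]
the complement of the set of divisors of powers of $a$. It has the following properties:
\begin{itemize}
\item It contains $I$, hence is nonempty: if some $x\in I$ divided $a^n$, then $a^n\in I$, which is impossible.
\item The set of divisors of powers of $a$ is closed under products and under taking divisors, so $\mathfrak p_a$ is a prime ideal.
\item It misses $1$ and $a$.
\end{itemize}
It is in fact the largest prime not containing $a$.

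With $\mathfrak p_a$ in hand, (ii) is immediate: if $f\notin\sqrt I$, then $\mathfrak p_f\in V(I)\setminus V(\langle f\rangle)$. Part (i) goes through exactly as you wrote it.

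The explicit prime is canonical and choice-free. Your Zorn argument is a bit longer, but it is verbatim the familiar proof for rings. Both approaches separate an arbitrary multiplicative set $S$ from any ideal disjoint from it; the explicit version uses $\{x \mid x\nmid s \text{ for all } s\in S\}$.
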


\subsubsection{The structure sheaf}

\begin{definition}\label{def:deitmar-structure-sheaf}
Let $M$ be a pointed commutative monoid. The \emph{structure sheaf} $\mathcal{O}_M$ on $\Spec_{Deitmar} M$ is the sheaf of pointed monoids defined by:
$$\mathcal{O}_M(U) := \left\{s: U \to \coprod_{\mathfrak{p} \in U} M_{\mathfrak{p}} \,\bigg|\, s(\mathfrak{p}) \in M_{\mathfrak{p}} \text{ and } s \text{ is locally a quotient}\right\}$$
where $M_{\mathfrak{p}} := (M \setminus \mathfrak{p})^{-1}M$ is the localization at $\mathfrak{p}$, and ``$s$ is \emph{locally a quotient}'' means: for each $\mathfrak{p} \in U$, there exist a neighborhood $V \subseteq U$ of $\mathfrak{p}$ and elements $a, f \in M$ with $f \notin \mathfrak{q}$ for all $\mathfrak{q} \in V$, such that $s(\mathfrak{q}) = \frac{a}{f}$ in $M_{\mathfrak{q}}$ for all $\mathfrak{q} \in V$.
\end{definition}

\begin{theorem}[\cite{deitmar2006schemesf1}]\label{thm:deitmar-sheaf-properties}
Let $M$ be a pointed commutative monoid and $\mathcal{O}_M$ its structure sheaf. Then:
\begin{enumerate}[label=(\roman*)]
\item For any prime $\mathfrak{p} \in \Spec M$, the stalk is $\mathcal{O}_{M,\mathfrak{p}} \cong M_{\mathfrak{p}}$.
\item For any $f \in M$, we have $\mathcal{O}_M(D(f)) \cong M_f := \{1,f,f^2,\ldots\}^{-1}M$.
\item The global sections recover $M$: $\Gamma(\Spec_{Deitmar} M, \mathcal{O}_M) \cong M$.
\end{enumerate}
\end{theorem}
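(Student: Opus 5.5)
The plan is to use the key structural feature of pointed commutative monoids: every $\Spec M$ has a unique closed point, the maximal ideal $\mathfrak{m} = M \setminus M^\times$ of Proposition \ref{prop:monoid-spectrum-properties}(i). Any open set containing $\mathfrak{m}$ is the whole space. Indeed, each prime $\mathfrak{p}$ satisfies $\mathfrak{p} \subseteq \mathfrak{m}$, since a prime contains no units. If $f \notin \mathfrak{m}$ then $f \notin \mathfrak{p}$ for every $\mathfrak{p}$, so every basic open containing $\mathfrak{m}$ is all of $\Spec M$. This collapses the usual gluing arguments, and I would prove the three parts in the order (i), (ii), (iii).

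For (i), I would define $\varphi\colon \mathcal{O}_{M,\mathfrak{p}} \to M_\mathfrak{p}$ by $[(U,s)] \mapsto s(\mathfrak{p})$. Surjectivity is immediate: $a/f$ with $f \notin \mathfrak{p}$ gives a section on $D(f)$. For injectivity, suppose $s,t$ near $\mathfrak{p}$ satisfy $s(\mathfrak{p}) = t(\mathfrak{p})$. Shrink to a common $D(g)$ on which $s = a/f$ and $t = b/h$. Then there is $k \notin \mathfrak{p}$ with $kha = kfb$, so $s$ and $t$ agree on $D(gfhk) \ni \mathfrak{p}$.

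For (ii), the key step is to reduce to (iii) via $D(f) \cong \Spec M_f$, with the structure sheaf restricting to $\mathcal{O}_{M_f}$. This uses the standard bijection between primes of $M_f$ and primes of $M$ missing $f$, and the identification $(M_f)_{\mathfrak{p}M_f} \cong M_\mathfrak{p}$ from the universal property, Proposition \ref{prop:monoid-localization-univ}.

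For (iii), consider the map $M \to \Gamma(\Spec M, \mathcal{O}_M)$, $a \mapsto (\mathfrak{p} \mapsto a/1)$.
\begin{itemize}
\item \emph{Surjectivity.} A global section $s$ is, near $\mathfrak{m}$, equal to $a/f$ on some open $V \ni \mathfrak{m}$ with $f \notin \mathfrak{q}$ for $\mathfrak{q} \in V$. By the observation above, $V = \Spec M$, and in particular $f \notin \mathfrak{m}$, so $f$ is a unit. Hence $s = (af^{-1})/1$ everywhere.
\item \emph{Injectivity.} If $a/1 = b/1$ in $M_\mathfrak{m}$, then $ha = hb$ for some $h \notin \mathfrak{m}$, i.e.\ $h$ is a unit, so $a = b$.
\end{itemize}
Compatibility with the monoid structure and the basepoint is routine.

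The main obstacle is the identification in (ii) of the restricted sheaf on $D(f)$ with $\mathcal{O}_{M_f}$. One must check that the "locally a quotient" condition transfers, that is, that fractions $a/(f^n g)$ over $M$ correspond to fractions over $M_f$. A direct alternative argument for (ii) also works: the prime $\mathfrak{p}_f = \{x : x \mid f^n \text{ fails for all } n\}$ is the unique closed point of $D(f)$, and $M_{\mathfrak{p}_f} = M_f$. Then rerun the argument of (iii) verbatim, and I would likely present it that way.
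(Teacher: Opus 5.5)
Your argument is correct, but it is organized differently from the paper's. The paper gives no proof of this cited result. The comparable argument is its proof of Theorem \ref{thm:stalks-and-sections}, which at level $1_+$ is this statement. There (i) is proved exactly as you do, (ii) is proved directly, and (iii) is obtained as the case $f=1$.

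For surjectivity in (ii) the paper uses the same device you do. Its point $\mathfrak{c}=\bigcup_{\mathfrak{p}\in D(f)}\mathfrak{p}$ is your $\mathfrak{p}_f$, and $D(f)$ is its smallest open neighborhood. The difference is that the paper extracts $f^n=bg$ from the monoid Nullstellensatz (Proposition \ref{thm:monoid-nullstellensatz}), whereas you read $g\mid f^n$ directly off $g\notin\mathfrak{p}_f$.

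For injectivity the paper runs the classical ring-theoretic argument. It forms the ideal $I$ of all $h$ with $hf^ma=hf^nb$, shows $V(I)\cap D(f)=\emptyset$, and gets $f^\ell\in I$ from the Nullstellensatz. You instead evaluate at the single point $\mathfrak{p}_f$ and use $M_{\mathfrak{p}_f}\cong M_f$, so one witness $h\notin\mathfrak{p}_f$ (which divides some $f^k$) suffices.

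Your route exposes the actual mechanism. Every prime of $D(f)$ is contained in $\mathfrak{p}_f$, so $D(f)$ is the minimal open neighborhood of $\mathfrak{p}_f$ in $\Spec M$. Hence $\mathcal{O}_M(D(f))$ simply \emph{is} the stalk at $\mathfrak{p}_f$, and (ii) and (iii) become formal consequences of (i) together with $M_{\mathfrak{p}_f}\cong M_f$ and $M_{\mathfrak{m}}=M$. Note that the property you need is this containment of every prime, not merely that $\mathfrak{p}_f$ is the unique closed point. Your argument also supplies the justification the paper only asserts when it says ``the coarsest open neighborhood of $\mathfrak{c}$ is $D(f)$''. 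The paper's Nullstellensatz-style injectivity is the version that survives in settings with no such closed point, such as rings, but here it is more than is needed.

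One small caveat applies to both you and the paper. If $M$ is the zero monoid or $f$ is nilpotent, then $\mathfrak{m}$, respectively $\mathfrak{p}_f$, is empty and hence not a prime ideal. In that case $\Spec M$, respectively $D(f)$, is empty and both sides are one-point sets, so these cases should be handled separately.
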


\subsubsection{Monoidal spaces and affine monoid schemes}

\begin{definition}\label{def:monoidal-space}
\begin{enumerate}[label=(\roman*)]
\item A \emph{monoidal space} is a pair $(X, \mathcal{O}_X)$ where $X$ is a topological space and $\mathcal{O}_X$ is a sheaf of pointed commutative monoids on $X$.

\item A \emph{morphism of monoidal spaces} $(f, f^\#): (X, \mathcal{O}_X) \to (Y, \mathcal{O}_Y)$ consists of a continuous map $f: X \to Y$ and a morphism of sheaves $f^\#: \mathcal{O}_Y \to f_*\mathcal{O}_X$.

\item A morphism is \emph{local} if for every $x \in X$, the stalk map $f^\#_x: \mathcal{O}_{Y,f(x)} \to \mathcal{O}_{X,x}$ satisfies $(f^\#_x)^{-1}(\mathcal{O}_{X,x}^\times) = \mathcal{O}_{Y,f(x)}^\times$.
\end{enumerate}
\end{definition}

\begin{definition}\label{def:affine-monoid-scheme}
An \emph{affine monoid scheme} is a monoidal space isomorphic to $(\Spec_{Deitmar} M, \mathcal{O}_M)$ for some pointed commutative monoid $M$. A morphism of affine monoid schemes is a local morphism of monoidal spaces.
\end{definition}

\begin{theorem}[\cite{deitmar2006schemesf1}]\label{thm:deitmar-antiequiv}
The contravariant functor
$$\Spec_{Deitmar}: \CMonoid_*^{\mathrm{op}} \to \mathbf{MonSch}$$
(where $\mathbf{MonSch}$ denotes the category of affine monoid schemes and local morphisms) establishes an anti-equivalence of categories.
\end{theorem}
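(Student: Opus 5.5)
The plan is to follow the classical argument for affine schemes: build quasi-inverse functors on objects and morphisms, and deduce everything from the structure of the sheaf $\mathcal{O}_M$ (Theorem \ref{thm:deitmar-sheaf-properties}). Full faithfulness will follow from a bijection
$$\Hom_{\CMonoid_*}(M,N) \;\cong\; \Hom_{\mathbf{MonSch}}(\Spec_{Deitmar} N, \Spec_{Deitmar} M)$$
that is natural in $M$ and $N$. Essential surjectivity holds by Definition \ref{def:affine-monoid-scheme}. So the whole task is to prove this bijection.

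\emph{Functoriality.} Let $\varphi: M\to N$ be a homomorphism of pointed monoids. Define $f=\varphi^{-1}: \Spec N\to\Spec M$. The preimage of a prime is prime, since $\varphi(1)=1$ and the prime condition pulls back. The map is continuous because $f^{-1}(D(a))=D(\varphi(a))$. For the sheaf map, send a section $s$ of $\mathcal{O}_M$ over $U$ to the section $\mathfrak{q}\mapsto \varphi_{\mathfrak{q}}(s(\varphi^{-1}\mathfrak{q}))$ of $\mathcal{O}_N$ over $f^{-1}U$. Here $\varphi_{\mathfrak{q}}: M_{\varphi^{-1}\mathfrak{q}}\to N_{\mathfrak{q}}$ is induced by the universal property in Proposition \ref{prop:monoid-localization-univ}. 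The result is again locally a quotient, with $a/g$ going to $\varphi(a)/\varphi(g)$. On stalks this map is $\varphi_{\mathfrak{q}}$. It is local: the non-units of $M_{\mathfrak{p}}$ are exactly the fractions $a/s$ with $a\in\mathfrak{p}$, and if $a\in\varphi^{-1}\mathfrak{q}$ then $\varphi(a)\in\mathfrak{q}$ is a non-unit. Functoriality in $\varphi$ is immediate.

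\emph{Injectivity and recovery of $\varphi$.} Taking global sections and applying Theorem \ref{thm:deitmar-sheaf-properties}(iii) gives back $\varphi: M\to N$. Hence the assignment is injective.

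\emph{Surjectivity, the main obstacle.} Let $(f,f^\#): \Spec N\to\Spec M$ be a local morphism. Set $\varphi=\Gamma(f^\#): M\to N$, using (iii). I must show $(f,f^\#)=\Spec\varphi$, and the key step is proving $f(\mathfrak{q})=\varphi^{-1}(\mathfrak{q})$. For each $\mathfrak{q}$, the compatibility of global sections with stalks gives a commutative square
$$M\to M_{f(\mathfrak{q})}\xrightarrow{f^\#_{\mathfrak{q}}} N_{\mathfrak{q}}, \qquad M\xrightarrow{\varphi} N\to N_{\mathfrak{q}}.$$
Locality says $f^\#_{\mathfrak{q}}$ reflects units. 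An element $a\in M$ lies outside $f(\mathfrak{q})$ exactly when $a/1$ is a unit in $M_{f(\mathfrak{q})}$. By locality this holds exactly when $\varphi(a)/1$ is a unit in $N_{\mathfrak{q}}$, that is, when $\varphi(a)\notin\mathfrak{q}$. This uses the description of units in $M_{\mathfrak{p}}$: $a/s$ is a unit iff $ha\notin\mathfrak{p}$ for some $h\notin\mathfrak{p}$, which by primality means $a\notin\mathfrak{p}$. So the underlying maps agree, and $f^\#_{\mathfrak{q}}$ is forced, by the universal property of localization, to equal $\varphi_{\mathfrak{q}}$. Two sheaf morphisms whose stalk maps all agree are equal, since sections are determined by their germs. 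Hence $f^\#=(\Spec\varphi)^\#$. The care needed here is to use the natural identifications from Theorem \ref{thm:deitmar-sheaf-properties}(i),(iii) consistently.
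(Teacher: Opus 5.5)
Your proposal is correct. The paper gives no proof of this statement (it only cites Deitmar), but your argument is the standard one and is the same template the paper uses for its $\mathbb{F}_1$-analogue in Proposition \ref{prop:morphism} and Theorem \ref{thm:antiequivalence}: functoriality via $\varphi^{-1}$ and the localized maps $\varphi_{\mathfrak q}$, recovering $\varphi$ from global sections, locality forcing $f(\mathfrak q)=\varphi^{-1}(\mathfrak q)$, the universal property of localization fixing the stalk maps, and essential surjectivity by definition. You also supply two details the paper leaves implicit there: the description of the units of $M_{\mathfrak p}$ that makes the locality step work, and the fact that an $f$-map into a sheaf is determined by its stalk maps.
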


\section{Commutative algebra over $\mathbb{F}_1$: Localization}\label{sec:localization}

In this section, we develop the theory of localization for commutative $\mathbb{F}_1$-algebras which will be useful for the development of a geometric theory over $\FF$. 

\subsection{Definition and basic properties}

\begin{definition}\label{def:multiplicatively-closed}
Let $A$ be a commutative $\mathbb{F}_1$-algebra. A subset $S \subseteq A(1_+)$ is \emph{multiplicatively closed} if:
\begin{enumerate}[label=(\roman*)]
\item $1_A \in S$ (contains the unit)
\item If $s, t \in S$, then $s \cdot t \in S$ (closed under multiplication)
\end{enumerate}
\end{definition}

\begin{remark}
We require $S \subseteq A(1_+)$ rather than allowing elements from arbitrary levels. This is natural: for an element to be ``invertible,'' it must live at level $1$ so that multiplication by its inverse could yield the unit $1_A \in A(1_+)$.
\end{remark}

We now define localization:

\begin{definition}\label{def:localization}
Let $A$ be a commutative $\mathbb{F}_1$-algebra and $S \subseteq A(1_+)$ a multiplicatively closed subset. We define the \emph{localization} $S^{-1}A$ as follows.

For each level $n_+ \in \Gamma^{\mathrm{op}}$, we set:
$$S^{-1}A(n_+) := \left\{\frac{a}{s} \,\bigg|\, a \in A(n_+), s \in S\right\} \bigg/ \sim$$
where the equivalence relation is:
$$\frac{a_1}{s_1} \sim \frac{a_2}{s_2} \quad\iff\quad \exists t \in S: ta_1s_2 = ta_2s_1 \text{ in } A(n_+).$$

The $\Gamma$-set structure, multiplication, and unit are defined as follows:

\begin{enumerate}[label=(\roman*)]
\item \emph{Structure maps:} For $f \in \Hom_{\Gamma^{\mathrm{op}}}(n_+, m_+)$, we define:
$$(S^{-1}A)(f)\left(\frac{a}{s}\right) := \frac{A(f)(a)}{s}.$$

\item \emph{Multiplication:} For $\displaystyle\frac{a_1}{s_1} \in S^{-1}A(m_+)$ and $\displaystyle\frac{a_2}{s_2} \in S^{-1}A(n_+)$, we define their product in $S^{-1}A(m_+ \wedge n_+)$ by:
$$\frac{a_1}{s_1} \cdot \frac{a_2}{s_2} := \frac{a_1 \cdot a_2}{s_1 \cdot s_2}$$
where $a_1 \cdot a_2$ is the product in $A(m_+ \wedge n_+)$ via the $\mathbb{F}_1$-algebra structure on $A$, and $s_1 \cdot s_2$ is computed in $A(1_+)$.

\item \emph{Unit:} The unit element is $\displaystyle 1_{S^{-1}A} := \frac{1_A}{1_A} \in S^{-1}A(1_+)$.

\item \emph{Basepoint:} The basepoint at each level is $\displaystyle\frac{*}{1_A}$, where $* \in A(n_+)$ is the basepoint.
\end{enumerate}
\end{definition}

We verify that this construction yields a well-defined $\mathbb{F}_1$-algebra:

\begin{proposition}\label{prop:localization-welldef}
The localization $S^{-1}A$ defined in Definition \ref{def:localization} is a well-defined commutative $\mathbb{F}_1$-algebra.
\end{proposition}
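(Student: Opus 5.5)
The approach is to make precise what ``$ta$'' means for $t\in A(1_+)$ and $a\in A(n_+)$, and then check the axioms level by level. Using the $\mathbb{F}_1$-algebra structure, a monoid map $\mu: A\wedge A\to A$ is, by the universal property of Day convolution, the same as a family of maps $\mu_{m,n}: A(m_+)\wedge A(n_+)\to A(m_+\wedge n_+)$ natural in $m_+$ and $n_+$. Since $1_+\wedge n_+ = n_+$, we obtain an action $A(1_+)\times A(n_+)\to A(n_+)$. Associativity, unit and commutativity of $\mu$ make this an action of the commutative monoid $A(1_+)$ on $A(n_+)$, and naturality in the second variable says that each $A(f)$ is $A(1_+)$-equivariant: $A(f)(ta)=t\,A(f)(a)$. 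With this notation $S^{-1}A(n_+)$ is just the localization of the $A(1_+)$-set $A(n_+)$ at $S$, in the sense of \Cref{def:monoid-localization} applied to sets with a monoid action.

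The first step is to check that $\sim$ is an equivalence relation on each level. Reflexivity and symmetry are clear. For transitivity, suppose $ta_1s_2=ta_2s_1$ and $t'a_2s_3=t'a_3s_2$. Multiplying the first relation by $t's_3$ and the second by $ts_1$, and using commutativity and associativity of the action, gives $(tt's_2)a_1s_3=(tt's_2)a_3s_1$. This is the same computation as for monoids.

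Next we check that the structure maps, the product and the unit are well defined, and that the axioms hold.
\begin{enumerate}[label=(\alph*)]
\item \emph{Structure maps.} If $\frac{a_1}{s_1}\sim\frac{a_2}{s_2}$ via $t$, then equivariance gives $t\,A(f)(a_1)s_2=t\,A(f)(a_2)s_1$. Functoriality and pointedness are inherited from $A$, because the denominator is untouched.
\item \emph{Product.} If $\frac{a_1}{s_1}\sim\frac{a_1'}{s_1'}$ via $t$, we need $t\,(a_1a_2)(s_1's_2)=t\,(a_1'a_2)(s_1s_2)$ in $A(m_+\wedge n_+)$. This requires the interchange law $(ua)\cdot b=u(a\cdot b)$ for $u\in A(1_+)$, which follows from associativity of $\mu$ applied to the triple $(1_+,m_+,n_+)$.
\item \emph{Naturality of the product.} We need $(S^{-1}A)(f\wedge g)$ to commute with multiplication, which is immediate from the naturality of $\mu_{m,n}$.
\item \emph{Axioms.} Associativity, commutativity (compatibility with the twist $m_+\wedge n_+\cong n_+\wedge m_+$) and the unit laws hold componentwise: on numerators they are the corresponding laws for $A$, and on denominators they are the laws in the commutative monoid $A(1_+)$.
\item \emph{Basepoint.} The basepoint $\frac{*}{1}$ is absorbing, since $*\cdot b=*$ in $A$.
\end{enumerate}
Once these level-wise pairings are natural, associative, unital and commutative, the universal property of Day convolution assembles them into maps $S^{-1}A\wedge S^{-1}A\to S^{-1}A$ and $\FF\to S^{-1}A$ satisfying the monoid-object axioms. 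The unit map sends $1\mapsto\frac{1_A}{1_A}$ at level $1_+$ and is extended by functoriality.

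The main obstacle is the bookkeeping in step (b): identifying the level-wise action of $A(1_+)$ on $A(n_+)$ and the mixed associativity $(ua)b=u(ab)$ precisely through the Day convolution description. Everything else in the proof is a formal transcription of the classical monoid-localization argument. I would first spell out the equivalence between monoid objects in $\GSet$ and natural families $\mu_{m,n}$ satisfying associativity, unitality and commutativity, and then derive all the needed identities from that description.
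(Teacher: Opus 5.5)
Your proposal is correct and follows the same levelwise strategy as the paper: well-definedness of the structure maps via naturality (your equivariance $A(f)(ta)=t\,A(f)(a)$), well-definedness of the product by multiplying the two relations, and the axioms inherited from $A$ and from the monoid $A(1_+)$. You are more careful than the paper in two places it leaves implicit or calls straightforward: you make the action of $A(1_+)$ on $A(n_+)$ and the interchange law $(ua)b=u(ab)$ explicit through the Day convolution description, and you write out the transitivity of $\sim$.
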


\begin{proof}
It is straightforward to check that $\sim$ is an equivalence relation. We now verify that it is well defined with respect to $\Gamma$-set operations and the algebra structure.

Suppose $\displaystyle\frac{a_1}{s_1} \sim \frac{a_2}{s_2}$ in $S^{-1}A(n_+)$. Then there exists $t \in S$ with $ta_1s_2 = ta_2s_1$ in $A(n_+)$. For any $f \in \Hom_{\Gamma^{\mathrm{op}}}(n_+, m_+)$, applying $A(f)$ yields:
$$tA(f)(a_1)s_2 = A(f)(ta_1s_2) = A(f)(ta_2s_1) = tA(f)(a_2)s_1$$
by naturality of the action.

Therefore $\displaystyle\frac{A(f)(a_1)}{s_1} \sim \frac{A(f)(a_2)}{s_2}$ in $S^{-1}A(m_+)$, so $(S^{-1}A)(f)$ is well-defined.

For compatibility with the algebra structure of $S^{-1}A$, suppose $\displaystyle\frac{a_1}{s_1} \sim \frac{a_2}{s_2}$ in $S^{-1}A(m_+)$ and $\displaystyle\frac{b_1}{t_1} \sim \frac{b_2}{t_2}$ in $S^{-1}A(n_+)$. Then there exist $r, r' \in S$ such that:
$$ra_1s_2 = ra_2s_1 \quad\text{and}\quad r'b_1t_2 = r'b_2t_1.$$

Multiplying these equations using the $\mathbb{F}_1$-algebra structure, we get:
$$(rr')(a_1 \cdot b_1)(s_2 \cdot t_2) = (rr')(a_2 \cdot b_2)(s_1 \cdot t_1)$$
in $A(m_+ \wedge n_+)$. Since $rr' \in S$, we have:
$$\frac{a_1 \cdot b_1}{s_1 \cdot t_1} \sim \frac{a_2 \cdot b_2}{s_2 \cdot t_2}.$$

We then verify that $S^{-1}A$ is a commutative $\FF$-algebra.

\emph{Identity:} $(S^{-1}A)(\mathrm{id}_{n_+})\left(\displaystyle\frac{a}{s}\right) = \displaystyle\frac{A(\mathrm{id}_{n_+})(a)}{s} = \displaystyle\frac{a}{s}$ since $A$ is a functor.

\emph{Composition:} For $f: n_+ \to m_+$ and $g: m_+ \to k_+$, we have:
\begin{align*}
(S^{-1}A)(g \circ f)\left(\frac{a}{s}\right) &= \frac{A(g \circ f)(a)}{s}\\
&= \frac{A(g)(A(f)(a))}{s}\\
&= (S^{-1}A)(g)\left(\frac{A(f)(a)}{s}\right)\\
&= (S^{-1}A)(g) \circ (S^{-1}A)(f)\left(\frac{a}{s}\right).
\end{align*}

The associativity and unit axioms for multiplication follow from the corresponding axioms for $A$, using the fact that localization preserves the algebraic structure. The commutativity follows from the commutativity of $A$.
\end{proof}

\subsection{Universal property}

The localization $S^{-1}A$ satisfies a universal property characterizing it as the initial $\mathbb{F}_1$-algebra in which elements of $S$ become units:

\begin{theorem}[Theorem \ref{thm:main-localization}]\label{thm:localization-universal}
Let $A$ be a commutative $\mathbb{F}_1$-algebra and $S \subseteq A(1_+)$ a multiplicatively closed subset, then there exists a natural morphism of $\mathbb{F}_1$-algebras:
$$\iota: A \to S^{-1}A, \quad a \mapsto \frac{a}{1_A}$$
such that $\iota(s)$ is a unit in $(S^{-1}A)(1_+)$ for all $s \in S$.

Moreover, this morphism is universal: for any commutative $\mathbb{F}_1$-algebra $B$ and morphism $\varphi: A \to B$ such that $\varphi(s) \in B(1_+)^\times$ for all $s \in S$, there exists a unique morphism $\overline{\varphi}: S^{-1}A \to B$ such that $\overline{\varphi} \circ \iota = \varphi$.
\end{theorem}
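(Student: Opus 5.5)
The proof proceeds levelwise, mirroring the classical argument of Proposition \ref{prop:monoid-localization-univ}. Each $A(n_+)$ is treated as a pointed set with an action of the monoid $A(1_+)$, given by the multiplication $A(1_+)\wedge A(n_+)\to A(1_+\wedge n_+)=A(n_+)$. Definition \ref{def:localization} is exactly the localization of this $A(1_+)$-set at $S$.

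\textbf{Step 1: the map $\iota$.} Define $\iota_n(a)=a/1_A$ at each level.
\begin{itemize}
\item Naturality in $n_+$ is immediate from the definition of the structure maps $(S^{-1}A)(f)$.
\item Compatibility with multiplication holds because $1_A\cdot 1_A=1_A$.
\item $\iota$ sends basepoints and unit to basepoints and unit by items (iii) and (iv) of the definition.
\item For $s\in S$, the element $1_A/s$ is an inverse of $s/1_A$: the product is $s/s$, and $s/s\sim 1_A/1_A$ via $t=1_A$.
\end{itemize}

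\textbf{Step 2: uniqueness.} Every element satisfies $a/s=(a/1_A)\cdot(1_A/s)$, where the product is taken in $S^{-1}A(1_+\wedge n_+)=S^{-1}A(n_+)$. Hence any $\overline{\varphi}$ with $\overline{\varphi}\circ\iota=\varphi$ that respects multiplication must satisfy
\[
\overline{\varphi}(a/s)=\varphi(s)^{-1}\cdot\varphi(a),
\]
with $\varphi(s)^{-1}\in B(1_+)$ acting on $B(n_+)$. The inverse is forced because $\overline{\varphi}(1_A/s)$ is an inverse of $\varphi(s)$, and inverses in a commutative monoid are unique.

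\textbf{Step 3: existence.} Take the formula above as the definition, and check the following.
\begin{itemize}
\item \emph{Well-definedness.} Suppose $ta_1s_2=ta_2s_1$. Apply $\varphi$, which is multiplicative, including for mixed-level products since it is a map of monoids in $\GSet$. Then multiply by $\varphi(t)^{-1}\varphi(s_1)^{-1}\varphi(s_2)^{-1}$.
\item \emph{Naturality in $n_+$.} This uses that the action of $B(1_+)$ on $B(n_+)$ commutes with $B(f)$. The underlying fact is that the multiplication $B\wedge B\to B$ is a natural transformation, so $B(f)(b\cdot x)=b\cdot B(f)(x)$ for $b\in B(1_+)$.
\item \emph{Multiplicativity.} Check $\overline{\varphi}\bigl((a_1/s_1)(a_2/s_2)\bigr)=\overline{\varphi}(a_1/s_1)\overline{\varphi}(a_2/s_2)$ in $B(m_+\wedge n_+)$. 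This follows from associativity and commutativity of $B$, which let level-$1$ scalars be moved across mixed-level products.
\item \emph{Unit and basepoint.} Preservation of these is clear.
\end{itemize}

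\textbf{Main obstacle.} The delicate point is working with the Day convolution description of multiplication. Morphisms $A\wedge A\to A$ correspond to natural families $A(m_+)\wedge A(n_+)\to A(m_+\wedge n_+)$. I would therefore state once a lemma: each $A(n_+)$ is an $A(1_+)$-module in pointed sets, the structure maps are equivariant, and the mixed products are bilinear for this action. This lemma follows from the associativity, commutativity and unit diagrams restricted to levels $(1,1,n)$ and $(1,m,n)$. With it in place, all the verifications above, including those left informal in Proposition \ref{prop:localization-welldef}, reduce to the classical monoid computations.
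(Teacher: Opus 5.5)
Your proposal is correct and follows the same route as the paper: the same formula $\overline{\varphi}(a/s)=\varphi(s)^{-1}\cdot\varphi(a)$, the same well-definedness and naturality checks, and uniqueness from the factorization $a/s=\iota(a)\cdot\iota(s)^{-1}$. Your auxiliary lemma makes explicit the compatibilities the paper only invokes informally when it says level-one elements ``act on higher levels in a compatible way.'' That lemma states that each $A(n_+)$ is a pointed $A(1_+)$-set with equivariant structure maps and bilinear mixed-level products, and it follows from the Day convolution form of the associativity, commutativity and unit axioms.
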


\begin{proof}
The map $a \mapsto \displaystyle\frac{a}{1_A}$ is clearly a well-defined $\FF$-algebra map, and that $\iota(s)$ is a unit in $(S^{-1}A)(1_+)$ for all $s \in S$.

Now given $\varphi: A \to B$ with $\varphi(s) \in B(1_+)^\times$ for all $s \in S$, define $\overline{\varphi}: S^{-1}A \to B$ at level $n_+$ by:
$$\overline{\varphi}\left(\frac{a}{s}\right) := \varphi(a) \cdot \varphi(s)^{-1}$$
where $\varphi(s)^{-1}$ denotes the multiplicative inverse in $B(1_+)$.

\emph{Well-definedness:} If $\displaystyle\frac{a_1}{s_1} = \frac{a_2}{s_2}$ in $S^{-1}A(n_+)$, then there exists $t \in S$ such that $ta_1s_2 = ta_2s_1$ in $A(n_+)$. Applying $\varphi$:
$$\varphi(t) \cdot \varphi(a_1) \cdot \varphi(s_2) = \varphi(t) \cdot \varphi(a_2) \cdot \varphi(s_1).$$

Since $\varphi(t), \varphi(s_1), \varphi(s_2) \in B(1_+)^\times$, we can multiply both sides by $\varphi(t)^{-1} \varphi(s_1)^{-1} \varphi(s_2)^{-1}$ to obtain:
$$\varphi(a_1) \cdot \varphi(s_1)^{-1} = \varphi(a_2) \cdot \varphi(s_2)^{-1}.$$

\emph{$\overline{\varphi}$ is a morphism of $\Gamma$-sets:} For $f \in \Hom_{\Gamma^{\mathrm{op}}}(n_+, m_+)$:
\begin{align*}
\overline{\varphi}\left((S^{-1}A)(f)\left(\frac{a}{s}\right)\right) &= \overline{\varphi}\left(\frac{A(f)(a)}{s}\right)\\
&= \varphi(A(f)(a)) \cdot \varphi(s)^{-1}\\
&= B(f)(\varphi(a)) \cdot \varphi(s)^{-1}\\
&= B(f)(\varphi(a) \cdot \varphi(s)^{-1})\\
&= B(f)\left(\overline{\varphi}\left(\frac{a}{s}\right)\right)
\end{align*}
where the third equality uses that $\varphi(s)^{-1} \in B(1_+)$ and $B(f)$ is natural, and the fourth equality uses that elements of $B(1_+)$ act on higher levels in a compatible way.

$\overline{\varphi}$ preserves multiplication and unit by direct computation and that
$$\overline{\varphi}(\iota(a)) = \overline{\varphi}\left(\frac{a}{1_A}\right) = \varphi(a) \cdot \varphi(1_A)^{-1} = \varphi(a) \cdot 1_B^{-1} = \varphi(a).$$

Now suppose $\psi: S^{-1}A \to B$ is another morphism with $\psi \circ \iota = \varphi$. For any $\displaystyle\frac{a}{s} \in S^{-1}A(n_+)$, we have:
$$\frac{a}{s} = \frac{a}{1_A} \cdot \frac{1_A}{s} = \iota(a) \cdot \iota(s)^{-1}$$
in $S^{-1}A(n_+)$. Therefore:
$$\psi\left(\frac{a}{s}\right) = \psi(\iota(a)) \cdot \psi(\iota(s))^{-1} = \varphi(a) \cdot \varphi(s)^{-1} = \overline{\varphi}\left(\frac{a}{s}\right). $$
\end{proof}

\begin{remark}[Alternative localization]\label{rem:alternative-localization}
One might consider a more general notion of localization where the multiplicatively closed set $S$ contains elements from arbitrary levels of $A$, not just $A(1_+)$. Concretely, define a \emph{graded multiplicatively closed set} to be a collection $S = \{S(n_+)\}_{n \geq 0}$ where $S(n_+) \subseteq A(n_+)$, closed under the multiplication maps $A(m_+) \wedge A(n_+) \to A(m_+ \wedge n_+)$. One could then define a localization by
$$S^{-1}A(n_+) := \left\{\frac{a}{s} \,\bigg|\, a \in A(m_+ \wedge n_+),\, s \in S(m_+) \text{ for some } m \geq 1\right\} \bigg/ \sim$$
with an appropriately defined equivalence relation.

While such a construction could yield a well-defined $\mathbb{F}_1$-algebra with potential applications, it is unsuitable for our geometric purposes. The fundamental obstruction is that even the \emph{trivial} localization at $S = \{1_A\}$ produces an $\mathbb{F}_1$-algebra strictly larger than $A$. Indeed, elements of the form $\displaystyle\frac{a}{1_A}$ with $a \in A(m_+ \wedge n_+)$ for $m > 1$ would appear in $(S^{-1}A)(n_+)$, introducing new elements at level $n_+$ that did not exist in $A(n_+)$.

This indicates that the global sections of the structure sheaf could  not recover the original algebra. Therefore we restrict to $S \subseteq A(1_+)$, necessitated by geometric considerations.
\end{remark}

\subsection{Special cases and examples}

\begin{example}[Localization at a prime]\label{ex:localization-at-prime}
Let $\mathfrak{p} \subseteq A(1_+)$ be a prime ideal. The localization of $A$ at $\mathfrak{p}$ is:
$$A_\mathfrak{p} := S^{-1}A$$
where $S = A(1_+) \setminus \mathfrak{p}$ is the complement of $\mathfrak{p}$.

At level $n_+$, we have:
$$A_\mathfrak{p}(n_+) = \left\{\frac{a}{s} \,\bigg|\, a \in A(n_+), s \in A(1_+) \setminus \mathfrak{p}\right\} \bigg/ \sim.$$
\end{example}

\begin{example}[Localization at an element]\label{ex:localization-at-element}
For $f \in A(1_+)$, the localization at $f$ is:
$$A_f := S^{-1}A$$
where $S = \{1_A, f, f^2, f^3, \ldots\}$.

At level $n_+$:
$$A_f(n_+) = \left\{\frac{a}{f^k} \,\bigg|\, a \in A(n_+), k \geq 0\right\} \bigg/ \sim.$$
\end{example}

\begin{example}[Localization of Eilenberg-MacLane algebras]\label{ex:EM-localization}
Let $R$ be a commutative ring and $S \subseteq R$ a multiplicatively closed subset (not containing $0$). Then:
$$S^{-1}(HR) \cong H(S^{-1}R)$$
where the right side is the Eilenberg-MacLane algebra of the classical localization ring.

\begin{proof}
At level $n_+$, we have:
$$S^{-1}(HR)(n_+) = \left\{\frac{(r_1, \ldots, r_n)}{s} \,\bigg|\, r_i \in R, s \in S\right\} \bigg/ \sim$$
$$= \left\{\left(\frac{r_1}{s}, \ldots, \frac{r_n}{s}\right) \,\bigg|\, r_i \in R, s \in S\right\} \bigg/ \sim'$$
$$\cong (S^{-1}R)^{\oplus n} = H(S^{-1}R)(n_+)$$
where $\sim'$ is the componentwise equivalence relation in $S^{-1}R$.

The isomorphism is given by:
$$\frac{(r_1, \ldots, r_n)}{s} \mapsto \left(\frac{r_1}{s}, \ldots, \frac{r_n}{s}\right).$$

One verifies that this respects the $\Gamma$-set structure and the $\mathbb{F}_1$-algebra structure.
\end{proof}
\end{example}

\begin{example}[Localization of spherical algebras]\label{ex:spherical-localization}
Let $M$ be a pointed commutative monoid and $S \subseteq M \setminus \{0\}$ a multiplicatively closed subset. Then:
$$S^{-1}(\mathbb{S}M) \cong \mathbb{S}(S^{-1}M)$$
where $S^{-1}M$ is Deitmar's localization of the monoid $M$.

\begin{proof}
At level $n_+$:
$$S^{-1}(\mathbb{S}M)(n_+) = S^{-1}(M \wedge n_+) \cong (S^{-1}M) \wedge n_+ = \mathbb{S}(S^{-1}M)(n_+).$$

The isomorphism is:
$$\frac{(m, i)}{s} \mapsto \left(\frac{m}{s}, i\right)$$
where $(m, i) \in M \wedge n_+$ represents the element $m$ in the $i$-th position.
\end{proof}
\end{example}

\section{Sheaves of $\mathbb{F}_1$-algebras}\label{sec:sheaves}

In this section, we develop the notion of sheaves of $\mathbb{F}_1$-algebras over topological spaces. Since $\GSet$ is a functor category and limits are computed pointwise, most sheaf-theoretic constructions can be performed levelwise, and this further extends to the category of commutative $\FF$-algebras. This allows us to efficiently establish sheafification, direct and inverse images, and stalks.

\subsection{Presheaves and sheaves}

\begin{definition}\label{def:presheaf-f1alg}
Let $X$ be a topological space. A \emph{presheaf of commutative $\mathbb{F}_1$-algebras} on $X$ is a contravariant functor from the category of open sets of $X$ (with inclusions as morphisms) to $\FAlg$.

Explicitly, it consists of:
\begin{itemize}
\item For each open set $U \subseteq X$, a commutative $\mathbb{F}_1$-algebra $\mathcal{F}(U)$
\item For each inclusion $V \subseteq U$ of open sets, a morphism of restriction map $\rho_{U,V}: \mathcal{F}(U) \to \mathcal{F}(V)$
\end{itemize}
satisfying the usual functoriality conditions: $\rho_{U,U} = \mathrm{id}$ and $\rho_{V,W} \circ \rho_{U,V} = \rho_{U,W}$ for $W \subseteq V \subseteq U$.
\end{definition}

\begin{remark}
Since an $\mathbb{F}_1$-algebra is a $\Gamma$-set with additional structure, a presheaf of $\mathbb{F}_1$-algebras $\mathcal{F}$ determines, for each level $n_+$, a presheaf of pointed sets $\mathcal{F}_n$ defined by:
$$\mathcal{F}_n(U) := \mathcal{F}(U)(n_+).$$
This allows for sheaf conditions and constructions to be verified levelwise.
\end{remark}

\begin{definition}\label{def:sheaf-f1alg}
A presheaf $\mathcal{F}$ of commutative $\mathbb{F}_1$-algebras on $X$ is a \emph{sheaf} if for every open covering $\{U_i\}_{i \in I}$ of an open set $U$, the diagram:
\begin{center}
\begin{tikzcd}
\mathcal{F}(U) \arrow[r, "\prod \rho_{U,U_i}"]
&
\displaystyle\prod_{i \in I}\mathcal{F}(U_i) \arrow[r, shift left=.75ex, "\alpha"]
  \arrow[r, shift right=.75ex, "\beta"']
&
\displaystyle\prod_{(i,j) \in I \times I}\mathcal{F}(U_i \cap U_j)
\end{tikzcd}
\end{center}
is an equalizer in $\FAlg$, where $\alpha$ and $\beta$ are the two projection maps.

\end{definition}

We have the following lemma that characterizes the sheaf condition:

\begin{lemma}\label{lem:sheaf-levelwise}
A presheaf $\mathcal{F}$ of commutative $\mathbb{F}_1$-algebras is a sheaf if and only if for every level $n_+$, the presheaf of pointed sets $\mathcal{F}_n$ is a sheaf in the classical sense.
\end{lemma}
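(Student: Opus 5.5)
The plan is to show that limits in $\FAlg$, specifically products and equalizers, exist and are computed levelwise. The statement then reduces to the fact that a diagram of $\Gamma$-sets is an equalizer exactly when it is an equalizer of pointed sets at every level $n_+$. I will work in two stages: first in $\GSet$, then lifting to $\FAlg$.

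\emph{Stage 1: limits in $\GSet$.} Since $\GSet$ is a category of pointed functors $\Gamma^{\mathrm{op}} \to \Set_*$, limits are formed pointwise. For a family of $\Gamma$-sets $F_i$, the product is $(\prod_i F_i)(n_+) = \prod_i F_i(n_+)$, with basepoint the tuple of basepoints and the evident structure maps. Likewise, the equalizer of $\alpha, \beta : F \rightrightarrows G$ is $n_+ \mapsto \{x \in F(n_+) \mid \alpha(x) = \beta(x)\}$. This is a sub-$\Gamma$-set by naturality of $\alpha$ and $\beta$, and it is pointed because both maps preserve basepoints. Consequently a cone $E \to F \rightrightarrows G$ is an equalizer in $\GSet$ if and only if each $E(n_+) \to F(n_+) \rightrightarrows G(n_+)$ is an equalizer in $\Set_*$. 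Equalizers in $\Set_*$ coincide with those in $\Set$, so this is the classical sheaf condition for $\mathcal{F}_n$.

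\emph{Stage 2: lifting to $\FAlg$.} I will show that the forgetful functor $U : \FAlg \to \GSet$ creates products and equalizers. I will describe the multiplication on $A \wedge A$ by its universal property: a map $A \wedge B \to C$ is the same as a family of maps $A(m_+) \wedge B(n_+) \to C(m_+ \wedge n_+)$, natural in $m$ and $n$. With this description, the multiplication on $\prod_i A_i$ is defined componentwise, $(a_i)\cdot(b_i) = (a_i b_i)$, and the unit is $(1_{A_i})$. For an equalizer $E \subseteq A$ of algebra maps $\alpha, \beta : A \rightrightarrows B$, the multiplication of $A$ restricts to $E$: if $\alpha(a) = \beta(a)$ and $\alpha(b) = \beta(b)$, then $\alpha(ab) = \alpha(a)\alpha(b) = \beta(ab)$, and the unit lies in $E$. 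Associativity, unit and commutativity hold because they hold in the ambient algebras. The universal property in $\FAlg$ follows from the one in $\GSet$, since the induced $\Gamma$-set map into the limit is multiplicative whenever each component is. Hence a diagram in $\FAlg$ is an equalizer in $\FAlg$ if and only if its image under $U$ is an equalizer in $\GSet$.

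Combining the two stages, applied to the sheaf diagram of Definition \ref{def:sheaf-f1alg}, gives both directions. In the forward direction, $\mathcal{F}(U)$ is isomorphic via the canonical map to the levelwise equalizer, so each $\mathcal{F}_n$ satisfies the sheaf condition. In the reverse direction, if each $\mathcal{F}_n$ is a sheaf, the levelwise equalizer computes the limit in $\FAlg$, and the canonical comparison map from $\mathcal{F}(U)$ is a levelwise bijection. That map is a morphism of algebras, and a levelwise bijective algebra morphism is an isomorphism, since its inverse is automatically natural and multiplicative.

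The main obstacle is Stage 2, specifically making the multiplication on a product well defined as a map out of the Day convolution $(\prod A_i) \wedge (\prod A_i)$. I would handle this using the bilinear description of maps out of a smash product: the composites $(\prod A_i)(m_+) \wedge (\prod A_i)(n_+) \to A_j(m_+) \wedge A_j(n_+) \to A_j(m_+ \wedge n_+)$ are natural, so they induce the required map componentwise.
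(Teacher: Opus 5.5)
Your proposal is correct and follows the same route as the paper. The paper's proof is the one-line observation that equalizers in $\FAlg$ are computed levelwise, so the sheaf diagram is an equalizer in $\FAlg$ exactly when it is one in $\Set_*$ at every level $n_+$. Your two stages (pointwise limits in $\GSet$, then the forgetful functor $\FAlg \to \GSet$ creating products and equalizers, plus the fact that a levelwise bijective algebra map is an isomorphism) supply the justification the paper leaves implicit, including that the products $\prod_i \mathcal{F}(U_i)$ in the diagram are themselves computed levelwise.
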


\begin{proof}
Since equalizers in $\FAlg$ are computed levelwise, the diagram is an equalizer in $\FAlg$ if and only if it is an equalizer at each level $n_+$ in $\Set_*$.

Therefore, $\mathcal{F}$ is a sheaf of $\mathbb{F}_1$-algebras if and only if each $\mathcal{F}_n$ is a sheaf of pointed sets.
\end{proof}

\begin{notation}
We denote by $\mathrm{PSh}_{\mathbb{F}_1}(X)$ the category of presheaves of commutative $\mathbb{F}_1$-algebras on $X$, and by $\mathrm{Sh}_{\mathbb{F}_1}(X)$ the full subcategory of sheaves.
\end{notation}

\subsection{Sheafification}

Classical sheaf theory provides a sheafification functor for presheaves of sets (or pointed sets). We use this to construct sheafification for presheaves of $\mathbb{F}_1$-algebras:

\begin{definition}[Sheafification]\label{cons:sheafification}
    Let $\mathcal{F}$ be a presheaf of commutative $\mathbb{F}_1$-algebras on $X$. For each level $n_+$, let $(-)^\#_n: \mathrm{PSh}_{\Set_*}(X) \to \mathrm{Sh}_{\Set_*}(X)$ denote the classical sheafification functor for presheaves of pointed sets.

Define the \emph{sheafification} $\mathcal{F}^\#$ by setting:
$$\mathcal{F}^\#(U)(n_+) := (\mathcal{F}_n)^\#(U)$$
for each open set $U \subseteq X$ and level $n_+$.

The $\Gamma$-set structure on $\mathcal{F}^\#(U)$ is induced from the $\Gamma$-set structure on $\mathcal{F}(U)$ via the universal property of sheafification at each level.
\end{definition}

\begin{theorem}\label{lem:sheafification-f1alg}
    The sheafification $\mathcal{F}^\#$ defined above is a sheaf of $\mathbb{F}_1$-algebras, equipped with a canonical morphism $\eta: \mathcal{F} \to \mathcal{F}^\#$ that satisfies the following universal property:

For any sheaf $\mathcal{G}$ of $\mathbb{F}_1$-algebras and any morphism $\varphi: \mathcal{F} \to \mathcal{G}$ of presheaves, there exists a unique morphism $\psi: \mathcal{F}^\# \to \mathcal{G}$ such that $\psi \circ \eta = \varphi$.
\end{theorem}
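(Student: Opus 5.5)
The approach is to build everything levelwise from the classical sheafification of presheaves of pointed sets. The key ingredient is the explicit stalk (germ) description: $(\mathcal{F}_n)^\#(U)$ is the set of functions $s: U \to \coprod_{x\in U} \mathcal{F}_{n,x}$ that are locally induced by sections of $\mathcal{F}_n$. Here $\mathcal{F}_{n,x} = \colim_{U\ni x}\mathcal{F}(U)(n_+)$. Using this model, the proof proceeds in the following order.

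First, I will show that stalks inherit an $\mathbb{F}_1$-algebra structure. The colimit defining $\mathcal{F}_x$ is filtered, and filtered colimits in $\Set_*$ commute with finite products. They also commute with the finite colimits used in Day convolution; concretely, a map $A\wedge B\to C$ amounts to a natural family of maps $A(m_+)\wedge B(n_+)\to C(m_+\wedge n_+)$. Consequently $\mathcal{F}_x(n_+) := \mathcal{F}_{n,x}$ is a $\Gamma$-set, and it carries a multiplication $\mathcal{F}_x(m_+)\wedge\mathcal{F}_x(n_+)\to\mathcal{F}_x(m_+\wedge n_+)$ and a unit. The associativity, unit and commutativity axioms hold because they are equations between maps on representatives.

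Second, I will put the structure on $\mathcal{F}^\#(U)$. For $f: n_+\to m_+$ and $s\in\mathcal{F}^\#(U)(n_+)$, I define $(f_*s)(x) := \mathcal{F}_x(f)(s(x))$. The product of $s\in\mathcal{F}^\#(U)(m_+)$ and $t\in\mathcal{F}^\#(U)(n_+)$ is defined pointwise by $x\mapsto s(x)\cdot t(x)$. Both results are again locally induced: if $s$ and $t$ are represented near $x$ by sections $a,b$ of $\mathcal{F}$ on a common small neighborhood $V$, then $A(f)(a)$ and $a\cdot b$ represent the new functions on $V$. The axioms hold pointwise because they hold in each stalk. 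Restriction maps are ordinary restriction of functions, so they are $\mathbb{F}_1$-algebra maps. Lemma~\ref{lem:sheaf-levelwise} then gives the sheaf property, since each level $(\mathcal{F}_n)^\#$ is a classical sheaf. The map $\eta_U$ sends a section to its function of germs. It is levelwise the classical unit, and it is compatible with the structure maps and multiplication because germ maps are.

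Third, I will prove the universal property. Given $\varphi:\mathcal{F}\to\mathcal{G}$, the classical universal property at each level $n$ produces a unique map of presheaves of pointed sets $\psi_n:(\mathcal{F}_n)^\#\to\mathcal{G}_n$ with $\psi_n\circ\eta=\varphi_n$. This gives uniqueness of $\psi$ immediately. It remains to check that the family $\{\psi_n(U)\}$ is a morphism of $\mathbb{F}_1$-algebras, i.e. that it commutes with the $\Gamma$-structure maps and with multiplication. I expect this to be the main obstacle. I will handle it with a local-agreement argument. Take a section $s$ of $\mathcal{F}^\#(U)(n_+)$ and choose a cover $\{V_i\}$ of $U$ with $s|_{V_i}=\eta(a_i)$. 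Then
$$\psi(f_*s)|_{V_i}=\psi(\eta(A(f)a_i))=\varphi(A(f)a_i)=\mathcal{G}(V_i)(f)(\varphi(a_i))=\big(\mathcal{G}(U)(f)\,\psi(s)\big)|_{V_i}.$$
Since $\mathcal{G}_m$ is a sheaf, the two sections agree on $U$. For products, I refine to a common cover on which both factors come from $\mathcal{F}$ and run the same argument. The unit is handled the same way. The other point needing care is the identification of the stalk of $\mathcal{F}^\#$ with $\mathcal{F}_x$ as algebras, which is implicit in the second step; there I need the structure maps to be natural with respect to the colimit, and that holds by construction.
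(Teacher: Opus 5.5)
Your argument is correct and has the same skeleton as the paper's proof. Both sheafify each $\mathcal{F}_n$ classically, equip the result with the induced structure, get the sheaf condition from Lemma~\ref{lem:sheaf-levelwise}, and obtain $\psi$ and its uniqueness from the levelwise classical universal property.

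You differ in how the structure is produced and checked. The paper works abstractly:
\begin{itemize}
\item it induces the structure maps by functoriality of sheafification;
\item it induces the multiplication by the universal property together with the fact that sheafification commutes with finite limits;
\item it then asserts that the $\psi_n$ assemble into an algebra map ``by naturality''.
\end{itemize}
You instead use the germ model:
\begin{itemize}
\item you show the stalks $\mathcal{F}_x$ are $\mathbb{F}_1$-algebras, because filtered colimits commute with $\times$ and $\wedge$;
\item you realize $\mathcal{F}^\#(U)$ as the sub-algebra of locally induced elements of $\prod_{x\in U}\mathcal{F}_x$;
\item you verify that $\psi$ respects structure maps and products by local agreement plus the sheaf property of $\mathcal{G}$.
\end{itemize}

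Your route is more precise exactly where the paper is sketchy. The multiplication is defined on a smash product, which is a quotient of a product rather than a finite limit. So the appeal to finite limits only yields a map on $(\mathcal{F}_m)^\#\times(\mathcal{F}_n)^\#$. It still needs a local check that this map kills the wedge, and one cannot simply sheafify this away, since the sectionwise smash of sheaves is not itself a sheaf. On germs, by contrast, the product is visibly well defined on the smash. Likewise, your local-agreement computation supplies the actual content behind the paper's ``by naturality''. The abstract route is shorter and would carry over to any setting where sheafification is suitably exact, without choosing a model.

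Two small remarks. In your displayed computation, $A(f)$ should read $\mathcal{F}(V_i)(f)$. Also, the identification of the stalks of $\mathcal{F}^\#$ with $\mathcal{F}_x$ is not needed for this statement, since your operations are defined directly from the stalks of $\mathcal{F}$.
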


\begin{proof}
For each open set $U$, we first verify that $\mathcal{F}^{\#}(U)$ is a well defined $\Gamma$-set. At level $0_+$, we have the constant presheaf with value the singleton set, which is already a sheaf, thus $\mathcal{F}^{\#}(U)(0_+)=\mathcal{F}_0(U)=\{*\}$, for open sets $U \subseteq X$ as required.

For the $\Gamma$-set structure maps: given $f \in \Hom_{\Gamma^{\mathrm{op}}}(n_+, m_+)$, the map $\mathcal{F}(U)(f): \mathcal{F}(U)(n_+) \to \mathcal{F}(U)(m_+)$ induces, by the functoriality of sheafification, a map:
$$\mathcal{F}^\#(U)(f): \mathcal{F}^\#(U)(n_+) = (\mathcal{F}_n)^\#(U) \to (\mathcal{F}_m)^\#(U) = \mathcal{F}^\#(U)(m_+).$$

These maps preserve composition and identity by functoriality of the sheafification functor of pointed sets.

Moreover, $\mathcal{F}^{\#}$ satisfies the sheaf condition since the equalizer property holds at each level $n_+$ by construction.

The multiplication and unit on $\mathcal{F}$ induce multiplication and unit on $\mathcal{F}^\#$ via the universal property of sheafification and the fact that sheafification commutes with finite limits.

This construction yields a sheafification functor 
\begin{align*}
    \#: \mathrm{PSh}_{\mathbb{F}_1}(X) &\rightarrow \mathrm{Sh}_{\mathbb{F}_1}(X)\\
    \mathcal{F} &\mapsto \mathcal{F}^{\#}
\end{align*}
Finally, given a morphism $\varphi: \mathcal{F} \to \mathcal{G}$ where $\mathcal{G}$ is a sheaf of commutative $\mathbb{F}_1$-algebras, the universal property of sheafification in pointed sets provides a unique levelwise morphism $\psi_n: \mathcal{F}_n^{\#} \to \mathcal{G}_n$ for each $n \geq 0$. The collection $\{\psi_n\}$ assembles into a morphism $\psi: \mathcal{F}^{\#} \to \mathcal{G}$ of sheaves of $\mathbb{F}_1$-algebras by the naturality of the construction.
\end{proof}

By standard argument, this yields the following corollary:

\begin{corollary}\label{cor:sheafification-adjunction}
Sheafification provides a left adjoint to the inclusion functor $i: \mathrm{Sh}_{\mathbb{F}_1}(X) \hookrightarrow \mathrm{PSh}_{\mathbb{F}_1}(X)$:
$$\mathrm{Hom}_{\mathrm{PSh}_{\mathbb{F}_1}(X)}(\mathcal{F}, i(\mathcal{G})) \cong \mathrm{Hom}_{\mathrm{Sh}_{\mathbb{F}_1}(X)}(\mathcal{F}^\#, \mathcal{G}).$$
\end{corollary}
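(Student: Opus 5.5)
The plan is to derive the adjunction formally from the universal property in Theorem \ref{lem:sheafification-f1alg}, in the standard way that a universal arrow for every object gives a left adjoint. First, $(-)^\#$ has to be a functor $\mathrm{PSh}_{\mathbb{F}_1}(X) \to \mathrm{Sh}_{\mathbb{F}_1}(X)$. Given a morphism of presheaves $u: \mathcal{F} \to \mathcal{F}'$, apply the universal property of $\eta_{\mathcal{F}}$ to the composite $\eta_{\mathcal{F}'} \circ u: \mathcal{F} \to \mathcal{F}'^\#$. Since $\mathcal{F}'^\#$ is a sheaf, this gives a unique $u^\#: \mathcal{F}^\# \to \mathcal{F}'^\#$ with $u^\# \circ \eta_{\mathcal{F}} = \eta_{\mathcal{F}'} \circ u$. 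Uniqueness then shows that $\mathrm{id}^\# = \mathrm{id}$ and $(v \circ u)^\# = v^\# \circ u^\#$: both sides satisfy the same defining equation against $\eta$. This also makes $\eta$ a natural transformation $\mathrm{id} \Rightarrow i \circ (-)^\#$.

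Next, define the comparison map
$$\Phi: \mathrm{Hom}_{\mathrm{Sh}_{\mathbb{F}_1}(X)}(\mathcal{F}^\#, \mathcal{G}) \to \mathrm{Hom}_{\mathrm{PSh}_{\mathbb{F}_1}(X)}(\mathcal{F}, i(\mathcal{G})), \qquad \psi \mapsto i(\psi) \circ \eta_{\mathcal{F}}.$$
Because $\mathrm{Sh}_{\mathbb{F}_1}(X)$ is a full subcategory, morphisms of sheaves are exactly morphisms of the underlying presheaves. With that, Theorem \ref{lem:sheafification-f1alg} says precisely that $\Phi$ is bijective: existence of $\psi$ gives surjectivity, and uniqueness gives injectivity.

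It remains to check naturality of $\Phi$ in both variables. In $\mathcal{G}$, for $w: \mathcal{G} \to \mathcal{G}'$ we have $\Phi(w \circ \psi) = w \circ \psi \circ \eta_{\mathcal{F}} = w \circ \Phi(\psi)$. In $\mathcal{F}$, for $u: \mathcal{F}' \to \mathcal{F}$ we have $\Phi(\psi \circ u^\#) = \psi \circ u^\# \circ \eta_{\mathcal{F}'} = \psi \circ \eta_{\mathcal{F}} \circ u = \Phi(\psi) \circ u$, using the defining square for $u^\#$. So we get a natural bijection, which is the claimed adjunction $(-)^\# \dashv i$.

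There is no genuine obstacle here; the argument is formal. The only real point is that Theorem \ref{lem:sheafification-f1alg} must deliver $\eta_{\mathcal{F}}$ and $\psi$ as morphisms in the category of commutative $\mathbb{F}_1$-algebra–valued presheaves, meaning they are compatible with multiplication and unit and not just levelwise maps of pointed sets. I will take this as given by that theorem. If one wanted to double-check it, it follows because the unique levelwise induced maps commute with the induced multiplications: both composites $\mathcal{F}^\# \wedge \mathcal{F}^\# \to \mathcal{G}$ agree after precomposing with $\eta$, and sheafification is universal there.
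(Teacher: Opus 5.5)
Your proposal is correct and follows the same approach as the paper, which obtains the corollary ``by standard argument'' from the universal property in Theorem \ref{lem:sheafification-f1alg}. You write out the steps the paper leaves implicit: functoriality of $(-)^\#$ via uniqueness, bijectivity of $\psi \mapsto i(\psi)\circ\eta_{\mathcal{F}}$, and naturality in both variables; like the paper, you take compatibility of $\eta$ and $\psi$ with the algebra structure from that theorem.
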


\subsection{Direct and inverse images}

Let $f: X \to Y$ be a continuous map. We define the direct and inverse image functors for sheaves of $\mathbb{F}_1$-algebras:

\begin{definition}\label{def:direct-image}
For a sheaf $\mathcal{F}$ of $\mathbb{F}_1$-algebras on $X$, the \emph{direct image} (or pushforward) $f_*\mathcal{F}$ is the sheaf of $\mathbb{F}_1$-algebras on $Y$ defined by:
$$f_*\mathcal{F}(V) := \mathcal{F}(f^{-1}(V))$$
for open sets $V \subseteq Y$. The sheaf condition is straightforward to check.
\end{definition}

\begin{definition}\label{def:inverse-image}
For a sheaf $\mathcal{G}$ of $\mathbb{F}_1$-algebras on $Y$, we first define the presheaf:
$$f_p\mathcal{G}(U) := \colim_{f(U) \subseteq V} \mathcal{G}(V)$$
where the filtered colimit is taken over all open sets $V \subseteq Y$ containing $f(U)$.

The \emph{inverse image} (or pullback) $f^{-1}\mathcal{G}$ is the sheafification:
$$f^{-1}\mathcal{G} := (f_p\mathcal{G})^\#.$$
\end{definition}

\begin{theorem}\label{thm:adjunction-direct-inverse}
The functors $f_*: \mathrm{Sh}_{\mathbb{F}_1}(X) \to \mathrm{Sh}_{\mathbb{F}_1}(Y)$ and $f^{-1}: \mathrm{Sh}_{\mathbb{F}_1}(Y) \to \mathrm{Sh}_{\mathbb{F}_1}(X)$ form an adjoint pair:
$$\Hom_{\mathrm{Sh}_{\mathbb{F}_1}(X)}(f^{-1}\mathcal{G}, \mathcal{F}) \cong \Hom_{\mathrm{Sh}_{\mathbb{F}_1}(Y)}(\mathcal{G}, f_*\mathcal{F}).$$
\end{theorem}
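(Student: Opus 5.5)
The plan is to reduce the adjunction to the one already available for presheaves, and then pass to sheaves using the sheafification adjunction of Corollary \ref{cor:sheafification-adjunction}. Concretely, I will show that there are natural bijections
$$\Hom_{\mathrm{Sh}_{\mathbb{F}_1}(X)}\bigl((f_p\mathcal{G})^\#, \mathcal{F}\bigr) \cong \Hom_{\mathrm{PSh}_{\mathbb{F}_1}(X)}(f_p\mathcal{G}, \mathcal{F}) \cong \Hom_{\mathrm{Sh}_{\mathbb{F}_1}(Y)}(\mathcal{G}, f_*\mathcal{F}).$$
The first bijection is exactly Corollary \ref{cor:sheafification-adjunction}, applied with the sheaf $\mathcal{F}$ viewed as a presheaf. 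So the real content is the second bijection: $f_p$ is left adjoint to $f_*$ at the level of presheaves of commutative $\mathbb{F}_1$-algebras.

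To build the second bijection I will use the unit and counit, as in the classical case. Suppose we are given $\varphi: f_p\mathcal{G} \to \mathcal{F}$. For each open $V \subseteq Y$, put $U = f^{-1}(V)$. Then $V$ is one of the opens containing $f(U)$, so the colimit has a canonical map $\mathcal{G}(V) \to f_p\mathcal{G}(f^{-1}V)$. Composing it with $\varphi_{f^{-1}V}$ gives
$$\mathcal{G}(V) \to \mathcal{F}(f^{-1}V) = f_*\mathcal{F}(V).$$
Conversely, suppose we are given $\psi: \mathcal{G} \to f_*\mathcal{F}$. For each open $U \subseteq X$ and each $V \supseteq f(U)$, the composite
$$\mathcal{G}(V) \xrightarrow{\psi_V} \mathcal{F}(f^{-1}V) \xrightarrow{\rho} \mathcal{F}(U)$$
makes sense, since $U \subseteq f^{-1}V$. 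These maps are compatible with restriction in $V$ because $\psi$ is natural. Hence they induce a map out of the colimit, $f_p\mathcal{G}(U) \to \mathcal{F}(U)$. One then checks that the two constructions are mutually inverse and natural in $\mathcal{G}$ and $\mathcal{F}$. This is a formal diagram chase that never uses the specific nature of $\FAlg$.

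The main obstacle is making sure that $f_p\mathcal{G}(U)$ is actually a colimit in $\FAlg$, and that the induced maps are algebra morphisms. The index category is the set of opens $V \supseteq f(U)$ ordered by reverse inclusion. It is filtered, since $V_1 \cap V_2$ again contains $f(U)$. So I will argue that filtered colimits in $\FAlg$ are computed levelwise in $\Set_*$. Take the levelwise filtered colimit of the underlying $\Gamma$-sets. The multiplication at level $m_+ \wedge n_+$ is then defined on representatives at a common stage, and this is well defined because filtered colimits commute with finite products (here $(-)\wedge(-)$ at each pair of levels). Associativity, unit and commutativity then hold because they hold at each stage. With this in hand, the universal property of the colimit in $\FAlg$ coincides with the levelwise one, so the maps $f_p\mathcal{G}(U) \to \mathcal{F}(U)$ constructed above are morphisms of commutative $\mathbb{F}_1$-algebras.

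Finally, I will check that $f_*\mathcal{F}$ is a sheaf. By Lemma \ref{lem:sheaf-levelwise} this reduces to the classical pointed-set statement at each level: a cover $\{V_i\}$ of $V$ pulls back to a cover $\{f^{-1}V_i\}$ of $f^{-1}V$. Combining the two bijections then gives the stated adjunction, and naturality follows from the naturality of each step.
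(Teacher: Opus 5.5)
Your proof is correct, but it is organized differently from the paper's.

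The paper argues entirely levelwise. Since $f_p$, sheafification, and filtered colimits in $\FAlg$ are all computed levelwise, $f^{-1}\mathcal{G}$ at level $n_+$ is the classical inverse image of the pointed-set sheaf $\mathcal{G}_n$. The classical adjunction for sheaves of pointed sets then gives a bijection at each level, and the paper simply asserts that compatibility with the $\FF$-algebra structure is automatic.

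You instead use the factorization $f^{-1} = (-)^\# \circ f_p$. You prove $f_p \dashv f_*$ by hand for presheaves of commutative $\FF$-algebras, via the universal property of the colimit, and then compose with the sheafification adjunction of Corollary~\ref{cor:sheafification-adjunction}.

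\textbf{What your route buys.} The transposed morphisms are visibly maps of $\FF$-algebras. The only $\FAlg$-specific input is that filtered colimits in $\FAlg$ are computed levelwise. The paper merely states this, whereas you justify it by defining the multiplication on representatives at a common stage. What that step really uses is that filtered colimits of pointed sets commute with the smash products $A(m_+)\wedge A(n_+)$. This holds because they commute with finite products and with the quotient by the wedge, so your ``finite products'' remark should be read that way.

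\textbf{What the paper's route buys.} It is shorter, because it imports the whole classical theorem at once. However, its word ``automatic'' conceals a real check. The levelwise transposes must commute with the $\Gamma$-structure maps, which follows from naturality of the classical adjunction. They must also commute with the multiplication, which needs the fact that the classical $f^{-1}$ preserves finite products. In your argument this finite-limit input is absorbed into the sheafification adjunction, where the paper has already used that sheafification commutes with finite limits.
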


\begin{proof}
Since all constructions are performed levelwise and filtered colimits in $\FAlg$ are computed levelwise in $\Set_*$, the adjunction follows from the classical adjunction $(f^{-1}, f_*)$ for sheaves of pointed sets, applied at each level $n_+$. The compatibility with the $\mathbb{F}_1$-algebra structure is automatic.
\end{proof}

\subsection{Stalks}

\begin{definition}\label{def:stalk}
Let $\mathcal{F}$ be a presheaf of commutative $\mathbb{F}_1$-algebras on $X$, and let $x \in X$. The \emph{stalk} of $\mathcal{F}$ at $x$ is the $\mathbb{F}_1$-algebra:
$$\mathcal{F}_x := \colim_{x \in U} \mathcal{F}(U)$$
where the filtered colimit is taken over the directed system of all open neighborhoods $U$ of $x$. 

Alternatively, the stalk of $\mathcal{F}$ at $x$ is the pullback
$$\mathcal{F}_x:=i^{-1}\mathcal{F}$$
where $i: \{x\} \rightarrow X$ is the inclusion.
\end{definition}

\begin{lemma}\label{lem:stalk-sheafification}
For any presheaf $\mathcal{F}$ of commutative $\mathbb{F}_1$-algebras and any point $x \in X$, there is a canonical isomorphism:
$$\mathcal{F}_x \cong \mathcal{F}^\#_x.$$
\end{lemma}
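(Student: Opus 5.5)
The plan is to reduce the statement to the classical fact for presheaves of pointed sets, working one level at a time, and then to check that the levelwise isomorphisms are compatible with the $\Gamma$-set and algebra structure. Filtered colimits in $\FAlg$ are computed levelwise in $\Set_*$, as already used in the proof of Theorem \ref{thm:adjunction-direct-inverse}. Hence for each $n_+$ we have
$$\mathcal{F}_x(n_+) = \colim_{x\in U}\mathcal{F}(U)(n_+) = (\mathcal{F}_n)_x,$$
and likewise $\mathcal{F}^\#_x(n_+) = ((\mathcal{F}_n)^\#)_x$, because by Definition \ref{cons:sheafification} we have $\mathcal{F}^\#(U)(n_+) = (\mathcal{F}_n)^\#(U)$.

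Next, for a presheaf of pointed sets $P$, the canonical map $\eta_x\colon P_x \to (P^\#)_x$, induced by the unit $\eta\colon P \to P^\#$, is a bijection; this is standard sheaf theory. Applying it with $P=\mathcal{F}_n$ gives, for every $n$, a bijection
$$\eta_{x,n}\colon \mathcal{F}_x(n_+) \to \mathcal{F}^\#_x(n_+).$$
These bijections are exactly the level-$n$ components of the map $\eta_x\colon \mathcal{F}_x\to\mathcal{F}^\#_x$ that the morphism $\eta\colon\mathcal{F}\to\mathcal{F}^\#$ of presheaves of $\FF$-algebras (Theorem \ref{lem:sheafification-f1alg}) induces on stalks by passing to colimits. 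Since $\eta$ is a morphism of presheaves of $\FF$-algebras, each component $\eta_U$ commutes with the structure maps $\mathcal{F}(U)(f)$, the multiplication and the unit. The stalk structure is the colimit of these structures, so $\eta_x$ is a morphism of commutative $\FF$-algebras.

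Finally, a morphism of $\FF$-algebras that is bijective at every level is an isomorphism. Its levelwise inverse is automatically natural in $\Gamma^{\mathrm{op}}$, and it is multiplicative because the multiplication $A\wedge A\to A$ is determined by the maps $A(m_+)\times A(n_+)\to A(m_+\wedge n_+)$ (Day convolution), which the inverse respects since $\eta_x$ does. This gives the canonical isomorphism $\mathcal{F}_x\cong\mathcal{F}^\#_x$.

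The main obstacle is the compatibility step, which depends on how the paper constructs the algebra structure on $\mathcal{F}^\#$. The multiplication is obtained from the universal property together with the fact that sheafification commutes with finite limits. To use it, I need the multiplication on $\mathcal{F}^\#$ to be characterised by making $\eta$ multiplicative. I would make this explicit with the standard stalk description of sheafification: sections of $(\mathcal{F}_n)^\#(U)$ are compatible families of germs in $\prod_{y\in U}(\mathcal{F}_n)_y$. Under this description the multiplication on $\mathcal{F}^\#$ is germwise, which makes the compatibility of $\eta_x$ with the product immediate.
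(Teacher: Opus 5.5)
Your proposal is correct and follows essentially the same route as the paper. The paper's proof is a one-line reduction to the classical fact that sheafification preserves stalks of presheaves of pointed sets, applied at each level $n_+$; your additional checks (that the levelwise bijections are the components of the stalk map induced by the algebra morphism $\eta\colon\mathcal{F}\to\mathcal{F}^\#$, using the germwise product on $\mathcal{F}^\#$, and so form an isomorphism of commutative $\FF$-algebras) spell out what the paper leaves implicit.
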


\begin{proof}
This follows from the classical result that sheafification preserves stalks for presheaves of sets (or pointed sets), applied levelwise.
\end{proof}

\begin{lemma}\label{lem:stalk-pullback}
Let $f: X \to Y$ be a continuous map, $\mathcal{G}$ a sheaf of commutative $\mathbb{F}_1$-algebras on $Y$, and $x \in X$. Then:
$$(f^{-1}\mathcal{G})_x \cong \mathcal{G}_{f(x)}.$$
\end{lemma}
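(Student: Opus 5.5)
The plan is to reduce the statement to the classical fact for sheaves of pointed sets, working level by level, exactly as in the proofs of Lemma \ref{lem:stalk-sheafification} and Theorem \ref{thm:adjunction-direct-inverse}.

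First, unwind the definitions. By Definition \ref{def:inverse-image}, $f^{-1}\mathcal{G} = (f_p\mathcal{G})^\#$. By Lemma \ref{lem:stalk-sheafification}, we therefore have $(f^{-1}\mathcal{G})_x \cong (f_p\mathcal{G})_x$. It then suffices to compute the stalk of the presheaf $f_p\mathcal{G}$ at $x$. This is the iterated filtered colimit
$$(f_p\mathcal{G})_x = \colim_{x\in U}\ \colim_{f(U)\subseteq V} \mathcal{G}(V).$$

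Next, I would compare this iterated colimit with $\mathcal{G}_{f(x)} = \colim_{f(x)\in V}\mathcal{G}(V)$. There is a canonical map from the double colimit to $\mathcal{G}_{f(x)}$, because every $V \supseteq f(U)$ with $x \in U$ contains $f(x)$. It is an isomorphism by a cofinality argument, which is where continuity of $f$ enters. Given any open $V \ni f(x)$, the set $U = f^{-1}(V)$ is an open neighborhood of $x$ with $f(U)\subseteq V$. So the pairs $(U,V)$ with $x\in U$ and $f(U)\subseteq V$ form a directed poset. The projection $(U,V)\mapsto V$ to neighborhoods of $f(x)$ is cofinal, and in fact surjective onto them via $V \mapsto (f^{-1}V, V)$. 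Moreover, the double colimit equals the colimit over this combined poset of the functor $(U,V)\mapsto \mathcal{G}(V)$, which factors through the projection. I would carry this out levelwise in $\Set_*$, where it is the standard argument.

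Finally, I would upgrade the levelwise bijections to an isomorphism of commutative $\mathbb{F}_1$-algebras. As noted in the proof of Theorem \ref{thm:adjunction-direct-inverse}, filtered colimits in $\FAlg$ are computed levelwise. The reason is that filtered colimits in $\Set_*$ commute with finite products, and hence with the finite smash products appearing in the Day convolution multiplication at the relevant levels. Consequently, the canonical comparison maps are maps of $\Gamma$-sets compatible with multiplication and unit, because they are induced by the restriction maps of $\mathcal{G}$, which are $\mathbb{F}_1$-algebra morphisms. A levelwise bijective morphism of $\mathbb{F}_1$-algebras is an isomorphism.

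I expect the main obstacle to be this last point: justifying that the filtered colimit of $\mathbb{F}_1$-algebras is computed levelwise, so that the multiplication on the colimit $\Gamma$-set is well defined. I would handle it by citing the levelwise computation already used in Theorem \ref{thm:adjunction-direct-inverse}, together with the fact that filtered colimits commute with finite limits in $\Set$. The cofinality step itself is routine.
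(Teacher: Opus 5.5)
Your proposal is correct and follows the same route as the paper: you pass to the presheaf $f_p\mathcal{G}$ via Lemma~\ref{lem:stalk-sheafification}, write its stalk as the iterated filtered colimit, and identify that colimit with $\mathcal{G}_{f(x)}$. Your version is more careful than the paper's, which only asserts the last identification. You make the cofinality argument explicit, including where continuity of $f$ enters via $V \mapsto (f^{-1}V, V)$, and you justify why filtered colimits of $\mathbb{F}_1$-algebras can be computed levelwise.
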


\begin{proof}
By Lemma \ref{lem:stalk-sheafification} and the definition $f^{-1}\mathcal{G} = (f_p\mathcal{G})^\#$, we have:
\begin{align*}
(f^{-1}\mathcal{G})_x &\cong (f_p\mathcal{G})_x \\
&= \colim_{x \in U} f_p\mathcal{G}(U) \\
&= \colim_{x \in U} \colim_{f(U) \subseteq V} \mathcal{G}(V) \\
&\cong \colim_{f(x) \in V} \mathcal{G}(V) \\
&= \mathcal{G}_{f(x)}
\end{align*}
where the isomorphism on the third line follows from the fact that the composition of filtered colimits (over neighborhoods of $x$ and neighborhoods of $f(x)$ containing $f(U)$) is isomorphic to the filtered colimit over all neighborhoods of $f(x)$.
\end{proof}

\subsection{Morphisms of sheaves}

\begin{definition}\label{def:f-map}
Let $f: X \to Y$ be a continuous map, $\mathcal{F}$ a sheaf of commutative $\mathbb{F}_1$-algebras on $X$, and $\mathcal{G}$ a sheaf of commutative $\mathbb{F}_1$-algebras on $Y$. An \emph{$f$-map} $\phi: \mathcal{G} \to \mathcal{F}$ is a morphism:
$$\phi: \mathcal{G} \to f_*\mathcal{F}$$
of sheaves on $Y$, or equivalently (by adjunction), a morphism:
$$\phi: f^{-1}\mathcal{G} \to \mathcal{F}$$
of sheaves on $X$.

Concretely, an $f$-map consists of morphisms of commutative $\mathbb{F}_1$-algebras:
$$\phi_V: \mathcal{G}(V) \to \mathcal{F}(f^{-1}(V))$$
for each open set $V \subseteq Y$, compatible with restrictions.
\end{definition}

\begin{lemma}\label{lem:stalk-map}
An $f$-map $\phi: \mathcal{G} \to \mathcal{F}$ induces morphisms of commutative $\mathbb{F}_1$-algebras:
$$\phi_x: \mathcal{G}_{f(x)} \to \mathcal{F}_x$$
for each $x \in X$.
\end{lemma}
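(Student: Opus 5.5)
The map $\phi_x$ will be built as a composite of two morphisms of filtered colimits, with the $f$-map supplying the compatible family of algebra maps. Recall $\mathcal{G}_{f(x)} = \colim_{f(x)\in V}\mathcal{G}(V)$ and $\mathcal{F}_x = \colim_{x \in U}\mathcal{F}(U)$, both taken in $\FAlg$ and computed levelwise in $\Set_*$, as noted in the proof of Theorem \ref{thm:adjunction-direct-inverse}.

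\emph{Step 1: the compatible family.} For each open $V \ni f(x)$, continuity of $f$ makes $f^{-1}(V)$ an open neighborhood of $x$. Write $\lambda_U:\mathcal{F}(U)\to\mathcal{F}_x$ for the canonical colimit maps. Composing $\phi_V:\mathcal{G}(V)\to\mathcal{F}(f^{-1}(V))$ with $\lambda_{f^{-1}(V)}$ gives morphisms of commutative $\mathbb{F}_1$-algebras
$$\theta_V := \lambda_{f^{-1}(V)}\circ \phi_V : \mathcal{G}(V) \to \mathcal{F}_x .$$

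\emph{Step 2: compatibility.} For $V'\subseteq V$ both containing $f(x)$, we need $\theta_{V'}\circ\rho_{V,V'} = \theta_V$. The $f$-map condition gives $\phi_{V'}\circ\rho_{V,V'} = \rho_{f^{-1}(V),f^{-1}(V')}\circ\phi_V$. Since $\lambda_{f^{-1}(V')}\circ\rho_{f^{-1}(V),f^{-1}(V')} = \lambda_{f^{-1}(V)}$, the claim follows. The universal property of the colimit in $\FAlg$ then yields a unique $\phi_x:\mathcal{G}_{f(x)}\to\mathcal{F}_x$ with $\phi_x\circ\lambda'_V=\theta_V$.

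\emph{Step 3: $\phi_x$ is a morphism of commutative $\mathbb{F}_1$-algebras.} This is the only point needing care, namely that the colimit formed in $\FAlg$ agrees with the levelwise colimit the paper uses. I would argue as follows.
\begin{itemize}
\item The indexing posets are directed.
\item Filtered colimits commute with finite products in $\Set$, and the Day convolution $(A\wedge A)(k_+)$ is itself a colimit.
\item Hence the smash product commutes with filtered colimits, so the levelwise colimit inherits multiplication $\mu$ and unit $\eta$ from the terms.
\end{itemize}
The induced map is then levelwise the classical stalk map of the pointed-set sheaves $\mathcal{G}_n\to(f_*\mathcal{F})_n$ composed to $\mathcal{F}_n$, namely $[V,s]\mapsto[f^{-1}(V),\phi_V(s)]$. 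It commutes with the $\Gamma$-structure maps, multiplication and unit because every $\theta_V$ does and these operations are computed on representatives.

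\emph{Alternative route.} One can instead use the adjoint form $f^{-1}\mathcal{G}\to\mathcal{F}$ from Definition \ref{def:f-map}, take its stalk at $x$, and precompose with the isomorphism $\mathcal{G}_{f(x)}\cong(f^{-1}\mathcal{G})_x$ of Lemma \ref{lem:stalk-pullback}. Either way the result is the same map.
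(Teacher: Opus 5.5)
Your argument is correct, but your main route differs from the paper's. The paper's proof is essentially your ``alternative route'': it takes the adjoint form $f^{-1}\mathcal{G}\to\mathcal{F}$ of the $f$-map, applies the stalk functor at $x$, and precomposes with the isomorphism $(f^{-1}\mathcal{G})_x\cong\mathcal{G}_{f(x)}$ of Lemma \ref{lem:stalk-pullback}. It then simply states, without further argument, the commutative square relating $\phi_V$ and $\phi_x$ as a characterizing property.

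You instead work with the form $\phi:\mathcal{G}\to f_*\mathcal{F}$ and build $\phi_x$ directly from the cocone $\lambda_{f^{-1}(V)}\circ\phi_V$. This needs only continuity of $f$, naturality of $\phi$ with respect to restrictions, and the universal property of the filtered colimit. Your approach buys independence from heavier machinery:
\begin{itemize}
\item you never use sheafification, the inverse image, or the adjunction of Theorem \ref{thm:adjunction-direct-inverse}, whose compatibility with the algebra structure the paper only asserts;
\item the square $\phi_x\circ\lambda'_V=\lambda_{f^{-1}(V)}\circ\phi_V$ holds by construction, rather than having to be traced through the adjunction unit and the isomorphism of Lemma \ref{lem:stalk-pullback};
\item you get the explicit germ formula $[V,s]\mapsto[f^{-1}(V),\phi_V(s)]$.
\end{itemize}

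Strictly speaking, your Step 3 is unnecessary if $\mathcal{F}_x$ is taken to be the colimit in $\FAlg$, as in Definition \ref{def:stalk}. In that case Step 2 already produces an $\FAlg$-morphism. The real value of Step 3 is that it justifies the paper's unproved claim that filtered colimits in $\FAlg$ are computed levelwise, via the smash product commuting with filtered colimits. That is what lets you identify $\phi_x$ with the levelwise germ map used elsewhere in the paper.

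The paper's route is shorter once Lemma \ref{lem:stalk-pullback} and the adjunction are available. It is also the more conceptual one, since it presents $\phi_x$ as the stalk of a morphism of sheaves on $X$.
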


\begin{proof}
Taking stalks of the morphism $f^{-1}\mathcal{G} \rightarrow \mathcal{F}$ and using the canonical morphism $(f^{-1}\mathcal{G})_x\cong \mathcal{G}_{f(x)}$, we obtain morphisms
$$  \phi_x: \mathcal{G}_{f(x)} \rightarrow \mathcal{F}_x $$
for each point $x \in X$. These stalk morphisms are characterized by the property that for every open set $V \subseteq Y$ containing $f(x)$, the following diagram commutes:
\[
\begin{tikzcd}
     \mathcal{G}(V) \arrow[d, ""] \arrow[r, ""] &  \mathcal{G}_{f(x)} \arrow[d, ""]   \\
    \mathcal{F}(f^{-1}(V)) \arrow[r, ""] &  \mathcal{F}_x
\end{tikzcd}
\]
where the horizontal arrows are the natural maps to stalks.
\end{proof}

\section{The prime spectrum}\label{sec:spectrum}

In this section, we combine the results from all previous sections to define the prime spectrum of a commutative $\mathbb{F}_1$-algebra. We use the prime spectrum of the underlying monoid $A(1_+)$ as our topological space and equip it with a structure sheaf of $\mathbb{F}_1$-algebras obtained by localization.

\subsection{Prime spectrum}

\begin{definition}\label{def:prime-spectrum}
Let $A$ be a commutative $\mathbb{F}_1$-algebra. Its \emph{underlying topological space} is:
$$|\Spec A| := \Spec_{Deitmar}(A(1_+))$$
the prime spectrum of the pointed commutative monoid $A(1_+)$ as in Definition \ref{def:prime-ideal-monoid}.

The \emph{structure sheaf} $\mathcal{O}_A$ on $|\Spec A|$ is the sheaf of commutative $\mathbb{F}_1$-algebras defined as follows. For an open set $U \subseteq |\Spec A|$, define $\mathcal{O}_A(U)$ as the $\Gamma$-set whose $n$-th level is:
$$\mathcal{O}_A(U)(n_+) := \left\{s: U \to \coprod_{\mathfrak{p} \in U} A_\mathfrak{p}(n_+) \,\bigg|\, s(\mathfrak{p}) \in A_\mathfrak{p}(n_+) \text{ for all } \mathfrak{p}, \text{ and } s \text{ is locally a quotient}\right\}$$
where:
\begin{itemize}
\item $A_\mathfrak{p} := (A(1_+) \setminus \mathfrak{p})^{-1}A$ is the localization of $A$ at the prime $\mathfrak{p}$ (Example \ref{ex:localization-at-prime})
\item ``$s$ is \emph{locally a quotient}'' means: for every $\mathfrak{p} \in U$, there exist an open neighborhood $V \subseteq U$ of $\mathfrak{p}$ and elements $a \in A(n_+)$, $f \in A(1_+)$ with $f \notin \mathfrak{q}$ for all $\mathfrak{q} \in V$, such that $s(\mathfrak{q}) = \displaystyle\frac{a}{f}$ in $A_\mathfrak{q}(n_+)$ for all $\mathfrak{q} \in V$.
\end{itemize}

The $\mathbb{F}_1$-algebra structure on $\mathcal{O}_A(U)$ is induced from the pointwise operations: for sections $s, t \in \mathcal{O}_A(U)(n_+)$, the $\Gamma$-set structure maps, multiplication, and unit are defined pointwise using the corresponding operations in the stalks $A_\mathfrak{p}$.
\end{definition}

\begin{definition}\label{def:affine}
    We define the \emph{prime spectrum} of $A$ to be the pair:
$$\Spec A := (|\Spec A|, \mathcal{O}_A).$$
\end{definition}

One can easily verify that $\mathcal{O}_A$ as defined above is indeed a sheaf by standard arguments.

\begin{remark}\label{rem:levels-of-structure}
The structure sheaf $\mathcal{O}_A$ is richer than Deitmar's structure sheaf:
\begin{itemize}
\item At level $1$: $\mathcal{O}_A(U)(1_+)$ is a sheaf of pointed monoids, recovering Deitmar's structure sheaf
\item At higher levels: $\mathcal{O}_A(U)(n_+)$ encodes the full $\Gamma$-set structure, capturing hyper-additive information \cite{hyper}
\end{itemize}

In particular, when $A = \mathbb{S}M$ is a spherical monoid algebra, the higher levels contain no additional information beyond level $1$, and we in effect recover Deitmar's construction. For general $\mathbb{F}_1$-algebras (for example Eilenberg-MacLane algebras), the higher levels are essential.
\end{remark}

\begin{remark}
    Definition \ref{def:affine} is motivated in part by Jacob Lurie's definition of prime spectrum in spectral algebraic geometry, where the notion of prime spectrum is studied for $\mathbb{E}_{\infty}$-rings rather than commutative rings. 
    
    Concretely, given an $\mathbb{E}_{\infty}$-ring $A$, let $| \Spec A|$ denote the \emph{Zariski spectrum} of the \emph{underlying} commutative ring $R= \pi_0 A$. The prime spectrum of $A$ is then defined as the locally spectrally ringed space
$$(| \Spec A|, \mathcal{O}_A)$$
where $\mathcal{O}_A$ is obtained by localizing $A$ with respect to elements $a \in \pi_0 A$.

In our setting of a commutative $\mathbb{F}_1$-algebra $A$, we replace $\pi_0 A$ (which serves as the \emph{underlying} commutative ring of an $\mathbb{E}_{\infty}$-ring) with $A(1_+)$, which serves as the \emph{underlying} commutative monoid. This further dictates the use of Deitmar's monoid spectrum instead of Zariski spectrum. We then endow this space with a structure sheaf, again obtained by localizing $A$ with respect to elements $a \in A(1_+)$.
\end{remark}

We now establish that the structure sheaf has the expected properties:

\begin{theorem}[Theorem \ref{thm:main-structure-sheaf}]\label{thm:stalks-and-sections}
Let $A$ be a commutative $\mathbb{F}_1$-algebra. Then:
\begin{enumerate}[label=(\roman*)]
\item For any prime ideal $\mathfrak{p} \in |\Spec A|$, the stalk is:
$$\mathcal{O}_{A,\mathfrak{p}} \cong A_\mathfrak{p}$$
as $\mathbb{F}_1$-algebras.

\item For any element $f \in A(1_+)$, the sections over the basic open $D(f)$ is:
$$\mathcal{O}_A(D(f)) \cong A_f$$
as $\mathbb{F}_1$-algebras.

\item In particular, the global sections of $\mathcal{O}_A$ recover $A$:
$$\Gamma(|\Spec A|, \mathcal{O}_A) := \mathcal{O}_A(|\Spec A|) \cong A.$$
\end{enumerate}
\end{theorem}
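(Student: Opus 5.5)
The idea is to mimic Hartshorne's proof of the classical statement (Prop.~II.2.2) level by level. The point is that the localizations defining $\mathcal{O}_A(U)(n_+)$ only ever divide by elements of the monoid $A(1_+)$. So every argument about fractions $a/s$ with $a \in A(n_+)$ uses only the $A(1_+)$-action on the pointed set $A(n_+)$, and Deitmar's monoid arguments (Theorem \ref{thm:deitmar-sheaf-properties}) should transfer almost verbatim. We construct the maps at each level, check they are bijections of pointed sets, and check compatibility with the structure maps $A(f)$ and with multiplication. Compatibility holds because all operations on $\mathcal{O}_A(U)$ and on the localizations are defined by the formulas of Definition \ref{def:localization}, which are applied pointwise.

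For (i), define $\mathcal{O}_{A,\mathfrak{p}}(n_+) \to A_\mathfrak{p}(n_+)$ by sending a germ $(U,s)$ to $s(\mathfrak{p})$. This is well defined and natural in $n_+$, and it respects multiplication because the operations on sections are pointwise. Surjectivity: given $a/f$ with $f \notin \mathfrak{p}$, the section $\mathfrak{q} \mapsto a/f$ on $D(f)$ is a preimage. Injectivity: suppose two germs agree at $\mathfrak{p}$. Shrink to a common basic open $D(h)$ on which $s = a/f$ and $t = b/g$. Equality in $A_\mathfrak{p}(n_+)$ gives $u \notin \mathfrak{p}$ with $u\,g\,a = u\,f\,b$ in $A(n_+)$. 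The same identity then shows $s = t$ on $D(fgu h)$, which is a neighborhood of $\mathfrak{p}$ by Proposition \ref{prop:monoid-spectrum-properties}(iii).

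For (ii), define $\psi: A_f \to \mathcal{O}_A(D(f))$ by $a/f^k \mapsto (\mathfrak{q} \mapsto a/f^k)$.

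Injectivity follows as in Deitmar. If $a/f^k$ and $b/f^l$ agree in every $A_\mathfrak{q}$ with $\mathfrak{q} \in D(f)$, consider the annihilator-type set $I = \{u \in A(1_+) : u f^l a = u f^k b \text{ in } A(n_+)\}$. This is an ideal of $A(1_+)$, and it is not contained in any prime $\mathfrak{q}$ of $D(f)$. We use $V(I) \subseteq V(\langle f \rangle)$ and Proposition \ref{thm:monoid-nullstellensatz}(ii) to get $f^m \in I$, which is exactly the equivalence relation for $A_f$. A simplification is available here: $D(f)$ contains the point $\mathfrak{m}_f$, the preimage in $A(1_+)$ of the unique maximal ideal of $A(1_+)_f$. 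Every other prime in $D(f)$ is contained in $\mathfrak{m}_f$, since primes of $D(f)$ correspond to primes of the localized monoid. So $D(f)$ has a generic-closed-point structure: its only open neighbourhood of $\mathfrak{m}_f$ is $D(f)$ itself, and $\mathcal{O}_A(D(f)) \to \mathcal{O}_{A,\mathfrak{m}_f}$ is an isomorphism. Combining this with (i), $\mathcal{O}_A(D(f)) \cong A_{\mathfrak{m}_f} = (A(1_+)\setminus \mathfrak{m}_f)^{-1}A$. One checks that $A(1_+)\setminus \mathfrak{m}_f$ consists of the $u$ with $u \mid f^k$ up to units in the localization, so inverting it is the same as inverting $f$, giving $A_{\mathfrak{m}_f} \cong A_f$.

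I would take this maximal-point route, since it bypasses the covering argument that, for rings, needs partitions of unity (unavailable without addition). Surjectivity is then automatic from (i), and compatibility with $\psi$ is immediate.

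The main obstacle is justifying the two facts in (ii): that $\mathfrak{m}_f$ has $D(f)$ as its smallest open neighbourhood, and that $A_{\mathfrak{m}_f} = A_f$. Both are purely monoid-theoretic statements about $A(1_+)$ and are essentially the content of Deitmar's Theorem \ref{thm:deitmar-sheaf-properties}(ii). The only new input is that the $A(1_+)$-equivalence relation on each $A(n_+)$ is compatible with these identifications.

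Part (iii) is (ii) with $f = 1_A$, since $D(1_A) = |\Spec A|$ and $A_{1_A} = A$. The latter holds because the trivial localization identifies $a/1$ with $b/1$ only when $a = b$: the witness $t = 1$ forces equality, and any witness $t \in \{1\}$ is $1$.
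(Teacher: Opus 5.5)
Your proposal is correct. Parts (i) and (iii) match the paper essentially verbatim; the real difference is in (ii).

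\textbf{The paper's route.} The paper proves injectivity of $\psi$ with the annihilator ideal $I$ and Proposition \ref{thm:monoid-nullstellensatz}(ii). It then proves surjectivity separately. There it uses the maximal point $\mathfrak{c}=\bigcup_{\mathfrak{p}\in D(f)}\mathfrak{p}$, which is your $\mathfrak{m}_f$, only to conclude that a section $s$ is a single quotient $a/g$ on all of $D(f)$ with $D(f)\subseteq D(g)$. It then invokes the Nullstellensatz again to write $f^n=bg$ and $a/g=ab/f^n$.

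\textbf{Your route.} You push the same topological observation further. Every open neighbourhood of $\mathfrak{m}_f$ contains $D(f)$, because opens in Deitmar's spectrum are closed under generization. Hence the germ map $\mathcal{O}_A(D(f))\to\mathcal{O}_{A,\mathfrak{m}_f}$ is an isomorphism. So (ii) reduces to (i) plus a purely monoid-level fact: $A(1_+)\setminus\mathfrak{m}_f$ is the saturation $\{u : u \mid f^k \text{ for some } k\}$ of $\{f^k\}$. From this, $A_f\to A_{\mathfrak{m}_f}$ is bijective at every level, and the composite with the germ and evaluation maps is exactly $\psi$.

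\textbf{What each buys.} Your route gives injectivity and surjectivity at once, so the annihilator argument you also sketch is redundant on it. It also replaces the Nullstellensatz, which the paper quotes without proof, by a direct computation of the units of $A(1_+)_f$. The cost is only checking that $\mathfrak{m}_f$ is a prime containing every prime of $D(f)$, which you indicate.

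\textbf{Edge case (both routes).} When $f$ is nilpotent in $A(1_+)$, $D(f)=\emptyset$ and neither $\mathfrak{m}_f$ nor $\mathfrak{c}$ exists. There you should note separately that $A_f(n_+)=\{*\}$ for all $n$, since a power of $f$ is the basepoint and kills every numerator. This matches $\mathcal{O}_A(\emptyset)$.
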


\begin{proof}
    \begin{enumerate}[label=(\roman*)]
        \item Define a morphism $\varphi: \mathcal{O}_{A,\mathfrak{p}} \to A_\mathfrak{p}$ by sending a germ $[s]_\mathfrak{p}$ (represented by a section $s \in \mathcal{O}_A(U)(n_+)$ for some neighborhood $U$ of $\mathfrak{p}$) to its value $s(\mathfrak{p}) \in A_\mathfrak{p}(n_+)$.

This map is clearly well defined: if sections $s, t$ represent the same germ at $\mathfrak{p}$, they agree on some neighborhood of $\mathfrak{p}$, so $s(\mathfrak{p}) = t(\mathfrak{p})$. We now show that $\phi$ is an isomorphism by showing it is an isomorphism at each level $n_+$.

\emph{Surjective:} Let $\displaystyle\frac{a}{g} \in A_\mathfrak{p}(n_+)$ where $a \in A(n_+)$ and $g \in A(1_+) \setminus \mathfrak{p}$. The basic open set $D(g) = \{\mathfrak{q} \mid g \notin \mathfrak{q}\}$ is a neighborhood of $\mathfrak{p}$.

Define $s \in \mathcal{O}_A(D(g))(n_+)$ by:
$$s(\mathfrak{q}) := \frac{a}{g} \in A_\mathfrak{q}(n_+)$$
for all $\mathfrak{q} \in D(g)$.

This is a well-defined section: it is constant on $D(g)$, hence locally a quotient with $a \in A(n_+)$ and $g \in A(1_+)$. The germ of $s$ at $\mathfrak{p}$ maps to $\displaystyle\frac{a}{g}$ under $\varphi$.

\emph{Injective:} Suppose $s, t \in \mathcal{O}_A(U)(n_+)$ satisfy $s(\mathfrak{p}) = t(\mathfrak{p})$ in $A_\mathfrak{p}(n_+)$. By shrinking $U$, we may assume both are represented as quotients: $s = \displaystyle\frac{a}{g}$ and $t = \displaystyle\frac{b}{h}$ on $U$ for some $a, b \in A(n_+)$ and $g, h \in A(1_+)$ with $g, h \notin \mathfrak{q}$ for all $\mathfrak{q} \in U$.

Since $\displaystyle\frac{a}{g} = \frac{b}{h}$ in $A_\mathfrak{p}(n_+)$, there exists $k \in A(1_+) \setminus \mathfrak{p}$ such that $kha = kgb$ in $A(n_+)$.

The set $D(g) \cap D(h) \cap D(k)$ is an open neighborhood of $\mathfrak{p}$, and for all $\mathfrak{q}$ in this set, we have:
$$\frac{a}{g} = \frac{kha}{kgh} = \frac{kgb}{kgh} = \frac{b}{h}$$
in $A_\mathfrak{q}(n_+)$.

Therefore, $s$ and $t$ agree on a neighborhood of $\mathfrak{p}$, so they represent the same germ.

        \item Define a morphism $\psi$ of $\mathbb{F}_1$-algebras:
        \begin{align*}
            \psi : A_f &\rightarrow \mathcal{O}_A(D(f))\\
            \psi(k_+):\displaystyle \frac{a}{f^n} & \mapsto s
        \end{align*}
        where $s(p)=\displaystyle\frac{a}{f^n} \in A_p(k_+)$ for all $p\in D(f)$. 
        
        \emph{Injectivity:} Suppose $\psi(\displaystyle \frac{a}{f^n})=\psi(\displaystyle \frac{b}{f^m})$. Then for every $p \in D(f)$, we have $\displaystyle \frac{a}{f^n}=\displaystyle \frac{b}{f^m}$ in $A_p$. This implies that for each $p \in D(f)$, there exists $h \in A(1_+) \setminus p$ such that $h_pf^ma=h_pf^nb$.

        Consider the ideal:
$$I := \{h \in A(1_+) \mid hf^na = hf^mb \text{ in } A(k_+)\}.$$

This is an ideal of $A(1_+)$, and that for each $\mathfrak{p} \in D(f)$, we have $h_\mathfrak{p} \in I \setminus \mathfrak{p}$, so $I \not\subseteq \mathfrak{p}$. Therefore:
$$V(I) \cap D(f) = \emptyset$$
which implies $V(I) \subseteq V(f)$.

By Proposition \ref{thm:monoid-nullstellensatz}(ii), $f \in \sqrt{I}$, so $f^\ell \in I$ for some $\ell \geq 1$. Thus:
$$f^\ell \cdot f^n a = f^\ell \cdot f^m b$$
which means $\displaystyle\frac{a}{f^m} = \frac{b}{f^n}$ in $A_f(k_+)$.

        \emph{Surjectivity:} Let $s \in \mathcal{O}_A(D(f))(k_+)$. We must show that $s = \psi\left(\displaystyle\frac{a}{f^\ell}\right)$ for some $a \in A(k_+)$ and $\ell \geq 0$.

Consider the maximal ideal $\mathfrak{c} := \bigcup_{\mathfrak{p} \in D(f)} \mathfrak{p}$ (the union of all primes not containing $f$). This is the unique maximal ideal in the monoid $A(1_+)_{D(f)}$ of sections over $D(f)$.

Since $s$ is locally a quotient at $\mathfrak{c}$ and the coarsest open neighborhood of $\mathfrak{c}$ is $D(f)$ itself, there exist $a \in A(k_+)$ and $g \in A(1_+)$ with $D(f) \subseteq D(g)$ such that $s(\mathfrak{p}) = \displaystyle\frac{a}{g}$ for all $\mathfrak{p} \in D(f)$.

Now, $D(f) \subseteq D(g)$ means $V(g) \subseteq V(f)$, so by Proposition \ref{thm:monoid-nullstellensatz}(ii), $f \in \sqrt{\langle g \rangle}$. Thus $f^n = bg$ for some $b \in A(1_+)$ and $n \geq 1$.

Therefore:
$$\frac{a}{g} = \frac{ab}{bg} = \frac{ab}{f^n} \in A_f(k_+).$$

Thus $s = \psi\left(\displaystyle\frac{ab}{f^n}\right)$, establishing surjectivity.

        \item In particular, take $f = 1_A$. Then $D(1_A) = |\Spec A|$, and:
$$\mathcal{O}_A(|\Spec A|) = \mathcal{O}_A(D(1_A)) \cong A_{1_A}.$$

But $A_{1_A} = \{1_A\}^{-1}A \cong A$ since localizing at the trivial multiplicatively closed set $\{1_A\}$ doesn't change the algebra.
    \end{enumerate}
\end{proof}

We also provide a proposition on the structure of the pair $(\Spec A, \mathcal{O}_A)$:

\begin{proposition}\label{prop:basic-opens-basis}
Let $A$ be a commutative $\mathbb{F}_1$-algebra. Then:
\begin{enumerate}[label=(\roman*)]
\item The basic open sets $\{D(f)\}_{f \in A(1_+)}$ form a basis for the topology on $|\Spec A|$.

\item For $f, g \in A(1_+)$, we have $D(f) \cap D(g) = D(fg)$.

\item The canonical morphism $A_f \to \mathcal{O}_A(D(f))$ (from Theorem \ref{thm:stalks-and-sections}(ii)) is functorial: if $D(g) \subseteq D(f)$, then the diagram commutes:
\begin{center}
\begin{tikzcd}
A \arrow[r] \arrow[d] & A_f \arrow[d] \arrow[r, "\cong"] & \mathcal{O}_A(D(f)) \arrow[d] \\
A_g \arrow[r, "\cong"] & (A_f)_g \arrow[r, "\cong"] & \mathcal{O}_A(D(g))
\end{tikzcd}
\end{center}
where the left vertical maps are the canonical localization maps, and $(A_f)_g$ denotes the localization of $A_f$ at the image of $g$.
\end{enumerate}
\end{proposition}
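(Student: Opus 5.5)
Parts (i) and (ii) are purely statements about the topological space $|\Spec A| = \Spec_{Deitmar}(A(1_+))$, so they reduce to Proposition \ref{prop:monoid-spectrum-properties} applied to the pointed commutative monoid $M = A(1_+)$. For (ii) we verify directly: a prime $\mathfrak{p}$ lies in $D(fg)$ iff $fg \notin \mathfrak{p}$. Since $\mathfrak{p}$ is an ideal, $f \in \mathfrak{p}$ forces $fg \in \mathfrak{p}$. Since $\mathfrak{p}$ is prime, $fg \in \mathfrak{p}$ forces $f \in \mathfrak{p}$ or $g \in \mathfrak{p}$. Hence $fg \notin \mathfrak{p}$ iff $f \notin \mathfrak{p}$ and $g \notin \mathfrak{p}$, i.e.\ $D(fg) = D(f) \cap D(g)$.

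For (i), the open sets of $|\Spec A|$ are by definition complements of the sets $V(I)$. We write $I = \bigcup_{h \in I} \langle h \rangle$, so that $V(I) = \bigcap_{h\in I} V(\langle h\rangle)$ and therefore $|\Spec A| \setminus V(I) = \bigcup_{h \in I} D(h)$. Thus every open set is a union of basic opens. The family $\{D(f)\}$ covers the space because $D(1_A)$ is everything, and it is closed under finite intersections by (ii). This establishes the basis property.

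For (iii), we first construct the horizontal maps.
\begin{itemize}
\item The maps $A \to A_f$ and $A \to A_g$ are the maps $\iota$ of Theorem \ref{thm:localization-universal}.
\item Assume $D(g) \subseteq D(f)$. Then $V(g) \subseteq V(f)$, and Proposition \ref{thm:monoid-nullstellensatz}(ii) gives $f^n = bg$ for some $n \ge 1$ and $b \in A(1_+)$. Hence $f$ becomes a unit in $A_g(1_+)$, with inverse $b/f^n$. The universal property of Theorem \ref{thm:localization-universal} then yields a unique map $A_f \to A_g$ under $A$, which is the middle vertical arrow.
\item The isomorphism $A_g \cong (A_f)_g$ follows by comparing universal properties. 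A map out of $(A_f)_g$ is a map out of $A$ inverting $f$ and $g$. Since $g$ already inverts $f$ via $f^n = bg$, this is the same as a map out of $A$ inverting $g$. Concretely, $a/(f^k g^m) \mapsto a b^k /g^{m+k}\cdot$ (suitably rescaled by powers of $f$); this only needs checking levelwise in each $A(n_+)$.
\end{itemize}

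The commutativity of the right square is then checked elementwise. The element $a/f^k \in A_f(n_+)$ goes across to the constant section $\mathfrak{p} \mapsto a/f^k$ on $D(f)$, which restricts to the constant section $a/f^k$ on $D(g)$. Going the other way, $a/f^k$ maps to $ab^k/g^k$ in $A_g$ and then to the constant section $ab^k/g^k$ on $D(g)$. These two sections agree in every $A_\mathfrak{q}(n_+)$ with $\mathfrak{q} \in D(g)$, because $ab^k f^k = a (bg)^k \cdot$ (after multiplying by suitable powers of $f$) matches $a g^k f^{\,nk}$ up to the relation $f^n = bg$. The left square commutes by uniqueness in the universal property, since every map in it is a map under $A$.

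The main obstacle is purely bookkeeping: writing the map $A_f \to A_g$ explicitly enough that the stalkwise equality in $A_\mathfrak{q}$ is transparent. The cleanest route is to avoid formulas altogether. Every map in the diagram is an $\mathbb{F}_1$-algebra map under $A$. Two maps $A_f \to \mathcal{O}_A(D(g))$ under $A$ can be compared after composing with the stalk maps $\mathcal{O}_A(D(g)) \to A_\mathfrak{q}$, and these stalk maps are jointly injective on sections. Each composite $A_f \to A_\mathfrak{q}$ is then the unique map under $A$ given by Theorem \ref{thm:localization-universal}, so the two composites coincide.
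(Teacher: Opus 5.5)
Your arguments for (i) and (ii) are correct and match the paper, which simply cites Deitmar's monoid theory. Your formula-free argument for the right square in (iii) is also correct: you compare the two maps $A_f \to \mathcal{O}_A(D(g))$ under $A$ after composing with the jointly injective evaluations $\mathcal{O}_A(D(g)) \to A_{\mathfrak{q}}$, and then invoke uniqueness in Theorem~\ref{thm:localization-universal}. This is a more precise version of the paper's one-line justification.

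The problem is the step that produces the arrow $A_f \to A_g$ and the isomorphism $A_g \cong (A_f)_g$: you reversed the inclusion. Taking complements, $D(g) \subseteq D(f)$ gives $V(f) \subseteq V(g)$, not $V(g) \subseteq V(f)$. So Proposition~\ref{thm:monoid-nullstellensatz}(ii) yields $g \in \sqrt{\langle f\rangle}$, i.e.\ $g^n = bf$. The relation $f^n = bg$ you wrote is what the opposite inclusion $D(f) \subseteq D(g)$ gives, and it does not make $f$ invertible in $A_g$. For example, take $A = \mathbb{S}M$ with $M$ the free pointed commutative monoid on $x,y$, and $f = xy$, $g = x$. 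Then $f = yg$, but $xy/1$ is not a unit in $A_x(1_+)$. Your proposed inverse $b/f^n$ is not even an element of $A_g$. The same reversal invalidates the sentence ``$g$ already inverts $f$ via $f^n = bg$'' in your proof of $A_g \cong (A_f)_g$, and also the explicit formula $a/f^k \mapsto ab^k/g^k$.

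The repair is to use $g^n = bf$. Then $f/1$ is a unit in $A_g(1_+)$ with inverse $b/g^n$, so any map out of $A$ inverting $g$ also inverts $f$. This gives both the map $A_f \to A_g$ under $A$ and, by your universal-property comparison, the isomorphism $A_g \cong (A_f)_g$. The middle arrow $A_f \to (A_f)_g$ of the diagram is then your map followed by this isomorphism. The explicit map is $a/f^k \mapsto ab^k/g^{nk}$. The identity $ab^k f^k = a g^{nk}$ in $A(m_+)$ shows at once that the two constant sections agree in every $A_{\mathfrak{q}}$ with $\mathfrak{q} \in D(g)$.

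You could also avoid the Nullstellensatz here. The composite $A_f \cong \mathcal{O}_A(D(f)) \to \mathcal{O}_A(D(g)) \cong A_g$ from Theorem~\ref{thm:stalks-and-sections}(ii) is an algebra map sending $f/1$ to $f/1$, so $f/1$ is a unit in $A_g(1_+)$. With this correction your proposal is a complete proof along the same lines as the paper's.
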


\begin{proof}
\begin{enumerate}[label=(\roman*)]
\item This is immediate from Deitmar's monoid schemes (Theorem \ref{prop:monoid-spectrum-properties}).

\item Also from Deitmar: $D(f) \cap D(g) = \{\mathfrak{p} \mid f, g \notin \mathfrak{p}\} = \{\mathfrak{p} \mid fg \notin \mathfrak{p}\} = D(fg)$.

\item The commutativity follows from the universal property of localization and the fact that restriction maps for sheaves are compatible with localization. 
\end{enumerate}
\end{proof}

\subsection{Locally $\mathbb{F}_1\mathrm{Alg}$-ed spaces}

To formulate the functoriality of $\Spec$ and the anti-equivalence theorem, we introduce the following appropriate notion of category:

\begin{definition}\label{def:locally-f1alg-space}
A \emph{locally $\mathbb{F}_1\mathrm{Alg}$-ed space} is a pair $(X, \mathcal{O}_X)$ consisting of:
\begin{itemize}
\item A topological space $X$
\item A sheaf of commutative $\mathbb{F}_1$-algebras $\mathcal{O}_X$ on $X$
\end{itemize}

A \emph{morphism of locally $\mathbb{F}_1\mathrm{Alg}$-ed spaces} $(f, f^\#): (X, \mathcal{O}_X) \to (Y, \mathcal{O}_Y)$ consists of:
\begin{itemize}
\item A continuous map $f: X \to Y$
\item An $f$-map $f^\#: \mathcal{O}_Y \to \mathcal{O}_X$ (i.e., a morphism $f^\#: \mathcal{O}_Y \to f_*\mathcal{O}_X$ of sheaves on $Y$)
\end{itemize}
such that for every $x \in X$, the induced stalk map:
$$f^\#_x: \mathcal{O}_{Y,f(x)} \to \mathcal{O}_{X,x}$$
is \emph{local at the first level}, meaning:
$$(f^\#_x)^{-1}(\mathcal{O}_{X,x}(1_+)^\times) = \mathcal{O}_{Y,f(x)}(1_+)^\times.$$

\end{definition}

\begin{remark}\label{rem:locality-condition}
The locality condition states that $f^\#_x$ maps units at the first level to units, and non-units to non-units. This is the appropriate generalization of the locality condition for locally ringed spaces in classical algebraic geometry.

We only require locality at level $1$ because $A(1_+)$ is the level that carries the monoid structure, and units are defined there.
\end{remark}

\begin{definition}\label{def:absolute-affine-scheme}
An \emph{absolute affine scheme} is a locally $\mathbb{F}_1\mathrm{Alg}$-ed space $(X, \mathcal{O}_X)$ that is isomorphic to the prime spectrum $\Spec A$ of some commutative $\mathbb{F}_1$-algebra $A$.

We denote by $\AffSch_{\mathbb{F}_1}$ the full subcategory of locally $\mathbb{F}_1\mathrm{Alg}$-ed spaces consisting of absolute affine schemes.
\end{definition}

\begin{definition}
    An \emph{absolute scheme} is a locally $\mathbb{F}_1\mathrm{Alg}$-ed space $(X, \mathcal{O}_X)$ such that every point $x \in X$ has an open neighborhood $U$ such that $(U, \mathcal{O}_X \mid_U)$ is isomorphic to an absolute affine scheme.
\end{definition}

\begin{remark}\label{rem:prime}
    One might be tempted to define the underlying topological space of a commutative $\FF$-algebra $A$ as the set of prime ideals of $A(1_+)$ that is also closed under the natural notion of addition which happens to be a multi-valued addition \cite{hyper}:
    $$a \oplus b :=\{ A(s)(z) \,\bigg|\, z \in A(2_+), X(p_1)(z) = a, X(p_2)(z) = b\}$$
    where $p_1, p_2$ are the projection maps, and $s$ is the summation map. Diagramatically:
\begin{center}
\begin{tikzcd}
p_1: & 0 \arrow[d] & 1 \arrow[d] & 2 \arrow[dll] & p_2: & 0 \arrow[d] & 1 \arrow[dl] & 2 \arrow[dl] \\
& 0 & 1 & & & 0 & 1
\end{tikzcd}
\end{center}
whereas 
\begin{center}
\begin{tikzcd}
s: & 0 \arrow[d] & 1 \arrow[d] & 2 \arrow[dl] \\
& 0 & 1
\end{tikzcd}
\end{center}

This definition has several advantages, the foremost being that it recovers the the same underlying topological space in the case of monoids, rings, and also hyperrings (following the framework of Jun \cite{jun}, who considers integral hyperrings to achieve the anti-equivalence of categories).
    
    However, the notion of addition in an $\mathbb{F}_1$-algebra (and more generally a $\Gamma$-set) is much looser than one might expect, and it has much more possibilities even than the setting of hyperrings.

    For example, for a nontrivial commutative $\mathbb{F}_1$-algebra such as $A=H\mathbb{Z}/((2) \cup (3))$ (where the quotient is done by simply collapsing the sub-$\FF$-algebra to the basepoint), it has no prime ideal at all if it is required to be closed under addition. Indeed, we have 
    $$0\oplus 0=A(1_+)$$
    since any $(2m,3n) \in A(2_+)$ could exhibit a sum, $\forall m, n \in \mathbb{Z}$, and $2,3$ are coprime. Therefore, a single $0 \in I$ indicates that the entire $A(1_+) \subseteq I$, contradicting the requirement that a prime ideal be proper.

    Similarly, other properties that should hold for a sound notion of prime spectrum, for example the notion of nilradical, which should be the intersection of all prime ideals, also does not hold under this setting, while it does hold if one relaxes the closure under addition condition, as one readily sees in the work of Deitmar \cite{deitmar2006schemesf1}.
\end{remark}

\section{The anti-equivalence}\label{sec:antiequiv}

In this section, we establish that the prime spectrum construction is functorial and yields an anti-equivalence of categories between commutative $\mathbb{F}_1$-algebras and absolute affine schemes. This demonstrates that absolute affine schemes behave formally like classical affine schemes.

We begin by showing that morphisms of $\mathbb{F}_1$-algebras induce morphisms of prime spectra:

\begin{proposition}\label{prop:morphism}
\begin{enumerate}
    \item Let $\varphi: A \to B$ be a morphism of commutative $\mathbb{F}_1$-algebras. Then $\varphi$ induces a morphism of locally $\mathbb{F}_1\mathrm{Alg}$-ed spaces:
$$(f, f^\#): \Spec B \to \Spec A.$$
    \item  Every morphism of locally $\mathbb{F}_1\mathrm{Alg}$-ed spaces:
$$(f, f^\#): \Spec B \to \Spec A$$
arises from a unique morphism of $\mathbb{F}_1$-algebras $\varphi: A \to B$.
\end{enumerate}
\end{proposition}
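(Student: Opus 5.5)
For part (1) I will follow the classical construction for $\Spec$ of rings, with Deitmar's monoid spectrum supplying the topology. The map $\varphi$ restricts at level $1$ to a pointed monoid homomorphism $\varphi_1 := \varphi(1_+): A(1_+) \to B(1_+)$. I define $f: |\Spec B| \to |\Spec A|$ by $f(\mathfrak{q}) := \varphi_1^{-1}(\mathfrak{q})$. The pullback of a prime is again a prime: it is an ideal, it is proper because $1_A \mapsto 1_B \notin \mathfrak{q}$, and it is prime because $\varphi_1$ is multiplicative. Continuity comes from the identity $f^{-1}(D(g)) = D(\varphi_1(g))$ together with Proposition \ref{prop:basic-opens-basis}(i).

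For the sheaf map, fix $\mathfrak{q}$ with $\mathfrak{p} = f(\mathfrak{q})$. The composite $A \to B \to B_\mathfrak{q}$ sends $A(1_+)\setminus\mathfrak{p}$ into $B_\mathfrak{q}(1_+)^\times$, so Theorem \ref{thm:localization-universal} gives a map $\varphi_\mathfrak{q}: A_\mathfrak{p} \to B_\mathfrak{q}$, namely $a/s \mapsto \varphi(a)/\varphi(s)$. For a section $s \in \mathcal{O}_A(U)(n_+)$ I set $f^\#(s)(\mathfrak{q}) := \varphi_\mathfrak{q}(s(f(\mathfrak{q})))$ on $f^{-1}(U)$. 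If $s = a/g$ near $f(\mathfrak{q})$, then $f^\#(s) = \varphi(a)/\varphi(g)$ on the preimage, so $f^\#(s)$ is again locally a quotient. Compatibility with the $\Gamma$-structure, the multiplication and the restrictions is pointwise.

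By Theorem \ref{thm:stalks-and-sections}(i) the stalk map is $\varphi_\mathfrak{q}$. Locality at level $1$ works as follows. Suppose $\varphi(a)/\varphi(s)$ is a unit in $B_\mathfrak{q}(1_+)$. Since $B_\mathfrak{q}(1_+)$ is Deitmar's localization $B(1_+)_\mathfrak{q}$, whose non-units are $\mathfrak{q}B_\mathfrak{q}$, this forces $\varphi(a) \notin \mathfrak{q}$. Hence $a \notin \mathfrak{p}$, and $a/s$ is a unit. The converse is immediate.

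For part (2), starting from $(f,f^\#)$ I take global sections and use Theorem \ref{thm:stalks-and-sections}(iii) to set $\varphi := f^\#_{|\Spec A|}: A \cong \mathcal{O}_A(|\Spec A|) \to \mathcal{O}_B(|\Spec B|) \cong B$. Four things then need to be checked:
\begin{enumerate}[label=(\alph*)]
\item $f(\mathfrak{q}) = \varphi_1^{-1}(\mathfrak{q})$;
\item $f^\#$ agrees with the sheaf map constructed from $\varphi$ in part (1);
\item $\varphi$ is uniquely determined by $(f,f^\#)$;
\item the constructions are mutually inverse.
\end{enumerate}

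For (a), the naturality square of Lemma \ref{lem:stalk-map} with $V = |\Spec A|$ shows that $A \to A_{f(\mathfrak{q})} \to B_\mathfrak{q}$ equals $A \xrightarrow{\varphi} B \to B_\mathfrak{q}$. Locality then gives, for $a \in A(1_+)$, that $a \notin f(\mathfrak{q})$ iff $a/1$ is a unit in $A_{f(\mathfrak{q})}$ iff $\varphi(a)/1$ is a unit in $B_\mathfrak{q}$ iff $\varphi(a) \notin \mathfrak{q}$. Here I use the monoid fact that the units of $M_\mathfrak{p}$ are exactly the classes $x/s$ with $x \notin \mathfrak{p}$.

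For (b), the stalk map $f^\#_\mathfrak{q}$ agrees with $\varphi_\mathfrak{q}$ on the image of $A$. By the uniqueness clause of Theorem \ref{thm:localization-universal} the two maps $A_{f(\mathfrak{q})} \to B_\mathfrak{q}$ coincide. A morphism of sheaves of $\Gamma$-sets is determined by its stalk maps, checked levelwise via Lemma \ref{lem:sheaf-levelwise}, so $f^\#$ equals the induced map.

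For (c), uniqueness of $\varphi$ is automatic, since $\varphi$ is recovered from $(f,f^\#)$ on global sections.

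For (d), start with $\varphi$, build $(f, f^\#)$ by part (1), and take global sections. By construction, global sections of that map give back $\varphi$ under the isomorphisms of Theorem \ref{thm:stalks-and-sections}(iii).

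The main obstacle I expect is (a): identifying the units in $A_\mathfrak{p}(1_+)$ and verifying that the stalk square genuinely factors through the localization. Both points rely on the fact that $(S^{-1}A)(1_+) = S^{-1}(A(1_+))$ as monoids, which follows directly from Definition \ref{def:localization}.
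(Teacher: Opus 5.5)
Your proposal is correct and follows essentially the same route as the paper. Like the paper, you define $f(\mathfrak{q})=\varphi^{-1}(\mathfrak{q})$ and build $f^\#$ from the localized maps $\varphi_\mathfrak{q}$; for the converse you take global sections, use locality to force $f(\mathfrak{q})=\varphi^{-1}(\mathfrak{q})$, and invoke the uniqueness clause of the localization universal property to pin down the stalk maps and hence $f^\#$. Your version is in fact more complete than the paper's, which leaves implicit the locality check in part (1), the justification of $f(\mathfrak{q})=\varphi^{-1}(\mathfrak{q})$ via the units of $A_\mathfrak{p}(1_+)=A(1_+)_\mathfrak{p}$, and the verification that the two constructions are mutually inverse.
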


\begin{proof}
\begin{enumerate}
        \item Given $\varphi \in \mathrm{Hom}_{\mathbb{F}_1\mathrm{Alg}}(A, B)$, define the continuous map
    \begin{align*}
         f: |\Spec B| &\rightarrow |\Spec A|\\
         p &\mapsto \varphi^{-1}(p)
    \end{align*}
    This is continuous since $f^{-1}(D(g))=D(\varphi(g))$ for $g \in A(1_+)$.

    For each prime $p \in |\Spec B|$, localization of $\varphi$ gives
    \begin{align*}
        \varphi_p: A_{\varphi^{-1}(p)} &\rightarrow B_p\\
        n_+ \text{ level}: \displaystyle\frac{a}{s} &\mapsto \displaystyle\frac{\varphi(a)}{\varphi(s)}
    \end{align*}
    for $s \notin \varphi^{-1}(p)$.
        
For any open set $V \subseteq |\Spec A|$, we obtain an $\mathbb{F}_1$-algebra map $f^{\#}_V: \mathcal{O}_A(V) \rightarrow \mathcal{O}_B(f^{-1}(V))$ by composing $s$ with $f$ and $\varphi_p$. Concretely, for $s: q \mapsto s(q) \in A_q(n_+), \forall q \in V \subseteq |SpecA|$, we define $f^{\#}_V: \mathcal{O}_A(V) \to \mathcal{O}_B(f^{-1}(V))$ by
$$f^{\#}(s)(p) = \varphi_{p}(s(f(p)))$$
for $s \in \mathcal{O}_A(V)$ and $p \in f^{-1}(V)$. The local nature of sections ensures this is well-defined and gives a morphism of sheaves.

\item Given a morphism $(f, f^{\#}): \operatorname{Spec} B \to \operatorname{Spec} A$, taking global sections yields
$$\varphi := f^{\#}_{|\operatorname{Spec} A|}: A = \mathcal{O}_A(|\operatorname{Spec} A|) \to \mathcal{O}_B(|\operatorname{Spec} B|) = B.$$
For any $p \in |\operatorname{Spec} B|$, the stalk map $f^{\#}_{p}: \mathcal{O}_{A,f(p)} \to \mathcal{O}_{B,p}$ fits into a commutative diagram:
$$
\begin{tikzcd}
A \arrow[d, "\varphi"] \arrow[r] & A_{f(p)} \arrow[d, "f^{\#}_{p}"] \\
B \arrow[r] & B_{p}
\end{tikzcd}
$$
Since $f^{\#}_{p}$ is local at the first level, we have $f(p) = \varphi^{-1}(p)$. The commutativity of the diagram then determines $f^{\#}_{p}$ uniquely as the localization of $\varphi$, which in turn determines the entire morphism $f^{\#}$.
\end{enumerate}
\end{proof}

From the above, it is clear that the assignment $A \mapsto \Spec A$ extends to a contravariant functor:
$$\Spec: \FAlg^{\mathrm{op}} \to \AffSch_{\mathbb{F}_1}.$$

Moreover, this is a fully faithful functor. That is, for any commutative $\mathbb{F}_1$-algebras $A$ and $B$:
$$\Hom_{\FAlg}(A, B) \cong \Hom_{\AffSch_{\mathbb{F}_1}}(\Spec B, \Spec A).$$

\begin{theorem}[Theorem \ref{thm:main-antiequiv}]\label{thm:antiequivalence}
The contravariant functor:
$$\Spec: \FAlg^{\mathrm{op}} \to \AffSch_{\mathbb{F}_1}$$
establishes an anti-equivalence of categories. The quasi-inverse is given by the global sections functor:
$$\Gamma: \AffSch_{\mathbb{F}_1} \to \FAlg, \quad (X, \mathcal{O}_X) \mapsto \Gamma(X, \mathcal{O}_X).$$
\end{theorem}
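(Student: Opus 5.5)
The plan is the standard one: show that $\Spec$ is fully faithful and essentially surjective, and then read off $\Gamma$ as a quasi-inverse. Full faithfulness is exactly Proposition \ref{prop:morphism}. Part (1) gives a map $\Hom_{\FAlg}(A,B) \to \Hom_{\AffSch_{\mathbb{F}_1}}(\Spec B, \Spec A)$. Part (2) shows every morphism of locally $\mathbb{F}_1\mathrm{Alg}$-ed spaces arises from a unique $\varphi$, namely its effect on global sections under the identifications $\Gamma(|\Spec A|,\mathcal{O}_A)\cong A$ of Theorem \ref{thm:stalks-and-sections}(iii). First I will check that these two assignments are mutually inverse. Starting from $\varphi$, the morphism $(f,f^\#)$ of part (1) has global-sections map equal to $\varphi$. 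This holds because, on $D(1_A)=|\Spec A|$, the constant section $a/1$ is sent to the constant section $\varphi(a)/1$, and the isomorphism $\psi$ of Theorem \ref{thm:stalks-and-sections}(ii) with $f=1_A$ is natural with respect to this. Conversely, given $(f,f^\#)$, the rest of the argument in Proposition \ref{prop:morphism}(2) recovers the morphism from its global sections.

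Next I verify functoriality, i.e.\ $\Spec(\psi\circ\varphi)=\Spec\varphi\circ\Spec\psi$ and $\Spec(\mathrm{id})=\mathrm{id}$. On spaces this is $(\psi\varphi)^{-1}(\mathfrak p)=\varphi^{-1}(\psi^{-1}(\mathfrak p))$. On sheaves it follows from $(\psi\varphi)_{\mathfrak p}=\psi_{\mathfrak p}\circ\varphi_{\psi^{-1}\mathfrak p}$ on localizations, which is a consequence of the uniqueness in Theorem \ref{thm:localization-universal}.

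Essential surjectivity is immediate from Definition \ref{def:absolute-affine-scheme}: every absolute affine scheme $X$ is isomorphic to some $\Spec A$. Since $\Gamma$ is a functor on locally $\mathbb{F}_1\mathrm{Alg}$-ed spaces, an isomorphism $X\cong\Spec A$ gives $\Gamma(X,\mathcal{O}_X)\cong\Gamma(\Spec A)\cong A$. Choosing such an isomorphism for each $X$ yields the natural isomorphism $\Spec\circ\Gamma\cong\mathrm{id}$. The other natural isomorphism $\Gamma\circ\Spec\cong\mathrm{id}_{\FAlg}$ is Theorem \ref{thm:stalks-and-sections}(iii), and its naturality in $A$ is the compatibility checked in the first step. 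A fully faithful, essentially surjective functor is an equivalence, and these two natural isomorphisms exhibit $\Gamma$ as its quasi-inverse.

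I expect the main obstacle to be in full faithfulness, specifically Proposition \ref{prop:morphism}(2): showing that a morphism of locally $\mathbb{F}_1\mathrm{Alg}$-ed spaces is determined by its global sections. This requires two things. The first is $f(\mathfrak p)=\varphi^{-1}(\mathfrak p)$. For this I use locality at level $1$: an element $a\in A(1_+)$ lies outside $f(\mathfrak p)$ exactly when $a/1$ is a unit in $A_{f(\mathfrak p)}(1_+)$. By locality this holds exactly when $\varphi(a)/1$ is a unit in $B_{\mathfrak p}(1_+)$, which holds exactly when $\varphi(a)\notin\mathfrak p$. Here I use that the units of $M_{\mathfrak p}$ are precisely the fractions with numerator outside $\mathfrak p$. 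The second is that each stalk map is the localization of $\varphi$. This follows from the commutative square in that proof together with Theorem \ref{thm:localization-universal}. Finally, since the $D(g)$ form a basis (Proposition \ref{prop:basic-opens-basis}) and each section of $\mathcal{O}_A$ is determined by its germs, the stalk maps determine $f^\#_V$ on every open $V$.
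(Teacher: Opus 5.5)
Your proposal is correct and follows the paper's proof. It gets full faithfulness from Proposition~\ref{prop:morphism}, essential surjectivity directly from Definition~\ref{def:absolute-affine-scheme}, and $\Gamma$ as quasi-inverse from $\Gamma(\Spec A,\mathcal{O}_A)\cong A$ in Theorem~\ref{thm:stalks-and-sections}(iii). The extra details you supply are points the paper leaves implicit, and they check out: naturality of $A\cong\Gamma(\Spec A,\mathcal{O}_A)$ in $A$, the unit criterion in $A_{\mathfrak q}(1_+)$ giving $f(\mathfrak p)=\varphi^{-1}(\mathfrak p)$, and recovering $f^\#$ from its stalk maps.
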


\begin{proof}
By Proposition \ref{prop:morphism}, the functor $\Spec$ induces a bijection:
$$\Hom_{\FAlg}(A, B) \xrightarrow{\sim} \Hom_{\AffSch_{\mathbb{F}_1}}(\Spec B, \Spec A)$$
for any commutative $\mathbb{F}_1$-algebras $A$ and $B$. This establishes that $\Spec$ is fully faithful.

By Definition \ref{def:absolute-affine-scheme}, every object of $\AffSch_{\mathbb{F}_1}$ is, by definition, a locally $\mathbb{F}_1\mathrm{Alg}$-ed space isomorphic to $\Spec A$ for some commutative $\mathbb{F}_1$-algebra $A$. Hence $\Spec$ is essentially surjective and establishes an anti-equivalence. Finally since $\Gamma(\Spec A, \mathcal{O}_A) \cong A$, we see that $\Gamma$ provides a quasi-inverse to $\Spec$.
\end{proof}

\begin{remark}
Theorem \ref{thm:antiequivalence} establishes that commutative $\mathbb{F}_1$-algebras serve as the appropriate algebraic objects for absolute algebraic geometry, or algebraic geometry over $\FF$, in precise analogy with the role of commutative rings in classical algebraic geometry. The anti-equivalence allows us to translate between algebraic questions about $\mathbb{F}_1$-algebras and geometric questions about absolute affine schemes.
\end{remark}

\begin{example}[The absolute point]\label{ex:absolute-point}
The simplest absolute affine scheme is the \emph{absolute point} $\Spec \FF$. We have:
$$|\Spec \FF| = \Spec_{\mathrm{Deitmar}}(\FF(1_+)) = \Spec_{\mathrm{Deitmar}}(\{0,1\}) = \{\mathfrak{p}_0\}$$
where $\mathfrak{p}_0 = \{0\}$ is the unique prime ideal of the pointed monoid $\{0,1\}$ (with $0$ as absorbing element and $1$ as identity).

The structure sheaf satisfies:
$$\mathcal{O}_{\FF}(\{\mathfrak{p}_0\}) \cong \FF.$$

The absolute point $\Spec \FF$ is the \emph{terminal object} in $\AffSch_{\mathbb{F}_1}$: for any absolute affine scheme $\Spec A$, there exists a unique morphism $\Spec A \to \Spec \FF$, corresponding to the unit map $\FF \to A$.
\end{example}

\begin{example}[Absolute affine spaces]\label{ex:absolute-affine-spaces}
For $n \geq 1$, let $\mathbb{N}$ denote the commutative monoid of natural numbers with additive operation and $0$ as unit (for the pointed version, one could add an absorbing element $-\infty$). The \emph{absolute affine $n$-space} is defined as:
\begin{align*}
    \mathbb{A}^n_{\mathbb{F}_1} :&= \Spec \FF[T_1, ..., T_n]\\
&=\Spec \mathbb{S}(\mathbb{N}^n)
\end{align*}
where $\mathbb{N}^n$ is the $n$-fold product of the natural numbers.

In particular, the underlying topological space of the absolute affine line $\mathbb{A}^1_{\mathbb{F}_1}$ has two points, $0$ and $(T)$, and the absolute affine plane $\mathbb{A}^2_{\mathbb{F}_1}$ has four points, $0, (T_1),(T_2), (T_1, T_2)$. In general, the underlying topological space of $\mathbb{A}^n_{\mathbb{F}_1}$ has $2^n$ points, of the form $(T_J)$ where $J$ ranges from the subset of $\{1, ..., n\}$.

The structure sheaf on the absolute affine line is as follows: 
\begin{align*}
    \mathcal{O}(\{0\})&=\FF[T, T ^{-1}]:=\mathbb{S}\ZZ\\
    \mathcal{O}(\{0,(T)\})&=\FF[T]
\end{align*}
recovering the information of Deitmar's monoid scheme, in the language of $\FF$-algebras via the fully faithful embedding $\mathbb{S}$.
\end{example}

\begin{example}[Eilenberg-Maclane Algebras]
    Let $R$ be a commutative ring, then $\Spec HR$ has the underlying topological space given by $\Spec_{Deitmar}R$ where $R$ is viewed as a commutative monoid. Concretely, $\mathfrak{p}$ is a prime ideal in $(R, \cdot)$ if and only if $R \setminus \mathfrak{p}$ is a multiplicatively closed set containing $1$.

    For example, in the case of $H\ZZ$, the Eilenberg-Maclane object corresponding to $\ZZ$, the underlying topological space of $\Spec H\ZZ$ consists of prime ideals of the form $\bigcup_{j \in J} (p_j) $ where $p_j$ are prime numbers, i.e. any union of classic prime ideals of the commutative ring $\ZZ$ is a prime ideal in the commutative monoid $\ZZ$. The largest prime ideal is $\ZZ \setminus \{-1,1\}$, the complement of the group of units.

    The structure sheaf as defined on the basic open sets of $| \Spec H\ZZ |$ is:
    \begin{align*}
        \mathcal{O}(\{\mathfrak{p}: f \notin \mathfrak{p}\}) &= (H\ZZ)_f\\
        &=H(\ZZ_f)
    \end{align*} 
    where $f$ is any integer, thus recovering the information of classic Zariski spectrum $\Spec \ZZ$, in the language of $\FF$-algebras via the fully faithful embedding $H$. One should note that there are much more primes than in the Zariski spectrum, a discrepancy that can be resolved by the base change functor introduced in the next section.
\end{example}

\section{Base change to classical schemes}\label{sec:applications}

In \cite{xu-hyper}, an adjunction between $-\otimes_{\FF} \ZZ$ and $H$ was established through an exposition on the associativity of the additive operation in a $\Gamma$-set. Explicitly, $-\otimes_{\FF} \ZZ$ is the functor given by:
$$A \otimes_{\FF} \ZZ := \ZZ[A(1_+)] / \mathcal{R}$$
where $\mathcal{R}$ is the ideal generated by the following relations:
\begin{enumerate}[label=(\roman*)]
\item \emph{(Basepoint relation)} $[*] = 0$.
\item \emph{(Additivity relations)} $[a] + [b] = [c]$ for all $a, b, c \in A(1_+)$ with $c \in a \oplus b$. (See Remark \ref{rem:prime} for the operation $\oplus$)
\end{enumerate}
and morphisms canonically determined by universal property. One can interpret the Eilenberg-Maclane functor $H$ as a forgetful/restriction of scalars functor, and $-\otimes_{\FF} \ZZ$ as an extension of scalars functor from $\FF$ to $\ZZ$.

Via the anti-equivalence of Theorem \ref{thm:antiequivalence}, this further induces an adjunction at the level of affine schemes:

\begin{theorem}\label{thm:base-change-schemes}
The extension of scalars functor defined above induces a base change functor:
$$-\otimes_{\FF} \ZZ: \AffSch_{\mathbb{F}_1} \to \AffSch_{\ZZ}$$
defined by:
$$\Spec A \mapsto \Spec(A \otimes_{\FF} \ZZ)$$
for any commutative $\mathbb{F}_1$-algebra $A$.

This functor is right adjoint to the restriction functor induced by the Eilenberg-MacLane functor:
$$H: \AffSch_{\ZZ} \to \AffSch_{\mathbb{F}_1}, \quad \Spec R \mapsto \Spec(HR).$$

Explicitly, there is a natural bijection:
$$\Hom_{\AffSch_{\ZZ}}(\Spec R, \Spec(A \otimes_{\FF} \ZZ)) \cong \Hom_{\AffSch_{\mathbb{F}_1}}(\Spec(HR), \Spec A).$$
\end{theorem}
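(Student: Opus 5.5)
The statement is a formal consequence of two anti-equivalences and one algebraic adjunction. I plan to transport the adjunction $(-\otimes_{\FF}\ZZ) \dashv H$ between $\FAlg$ and $\CRing$ across the anti-equivalences $\Spec: \FAlg^{\mathrm{op}} \to \AffSch_{\FF}$ (Theorem \ref{thm:antiequivalence}) and $\Spec: \CRing^{\mathrm{op}} \to \AffSch_{\ZZ}$ (classical). Since passing to opposite categories swaps left and right adjoints, the induced functor $\Spec A \mapsto \Spec(A\otimes_{\FF}\ZZ)$ becomes right adjoint to $\Spec R \mapsto \Spec(HR)$. This is exactly the claimed orientation.

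\textbf{Step 1: the functors on affine schemes.} First I would define the functors. For an object $X$ of $\AffSch_{\FF}$, choose $A$ with $X \cong \Spec A$; by Theorem \ref{thm:antiequivalence} one may canonically take $A = \Gamma(X,\mathcal{O}_X)$. Set $X \otimes_{\FF}\ZZ := \Spec(\Gamma(X,\mathcal{O}_X)\otimes_{\FF}\ZZ)$. A morphism $X\to Y$ corresponds to a unique algebra map $\Gamma(Y)\to\Gamma(X)$ by full faithfulness (Proposition \ref{prop:morphism}). Applying $-\otimes_{\FF}\ZZ$ and then classical $\Spec$ gives the image morphism, and functoriality follows from functoriality of each piece. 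The functor $H$ on $\AffSch_\ZZ$ is defined the same way using $\Gamma$ and the classical anti-equivalence. Both are well defined up to the canonical isomorphisms $\Gamma\circ\Spec\cong \mathrm{id}$ (Theorem \ref{thm:stalks-and-sections}(iii) and its classical analogue).

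\textbf{Step 2: the hom-set bijection.} Then I would write the chain of natural bijections
\begin{align*}
\Hom_{\AffSch_{\ZZ}}(\Spec R, \Spec(A\otimes_{\FF}\ZZ)) &\cong \Hom_{\CRing}(A\otimes_{\FF}\ZZ, R) \\
&\cong \Hom_{\FAlg}(A, HR) \\
&\cong \Hom_{\AffSch_{\FF}}(\Spec(HR), \Spec A).
\end{align*}
The three bijections come from the following results, in order.
\begin{enumerate}[label=(\roman*)]
\item The first uses the classical anti-equivalence for rings.
\item The second uses the adjunction from \cite{xu-hyper}. Concretely, a ring map $A\otimes_{\FF}\ZZ\to R$ is the same as a monoid map $A(1_+)\to (R,\cdot)$ killing the basepoint and respecting the relations $[a]+[b]=[c]$ for $c\in a\oplus b$, and this data corresponds to an $\FF$-algebra map $A\to HR$.
\item The third is Theorem \ref{thm:antiequivalence} applied to $A$ and $HR$.
\end{enumerate}

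\textbf{Step 3: naturality.} Naturality in both variables holds because each bijection is natural: the outer two come from equivalences of categories and the middle one is an adjunction. By the standard criterion, a natural hom-bijection is equivalent to an adjunction. Since every object of either affine category is isomorphic to some $\Spec A$ or $\Spec R$, naturality on these representatives suffices.

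The main obstacle is bookkeeping rather than mathematics. One must check that the functors in Step 1 are genuinely functorial and independent of the chosen presentation $X\cong\Spec A$, and that the middle adjunction is natural in $A$ as well as in $R$. I would handle the presentation issue by defining everything through $\Gamma$, which is canonical, and then using the unit isomorphisms of the two anti-equivalences to identify $\Gamma(\Spec A)$ with $A$. A secondary point is to confirm the direction of the adjunction: $-\otimes_{\FF}\ZZ$ is a left adjoint on algebras, so after the contravariant $\Spec$ it is a right adjoint on schemes. This matches the statement.
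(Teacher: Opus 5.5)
Your proposal is correct and follows the paper's proof. It uses the same chain of three bijections: the classical anti-equivalence for rings, then the adjunction $(-\otimes_{\FF}\ZZ)\dashv H$ from \cite{xu-hyper}, then Theorem \ref{thm:antiequivalence}. Your Steps 1 and 3 only spell out bookkeeping the paper leaves implicit (defining the functors on arbitrary objects via $\Gamma$, and naturality), and your check of which side is the right adjoint is correct.
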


\begin{proof}
By Theorem \ref{thm:antiequivalence}, morphisms of affine schemes correspond to morphisms of their coordinate algebras (contravariantly). Therefore:
\begin{align*}
\Hom_{\AffSch_{\ZZ}}(\Spec R, \Spec(A \otimes_{\FF} \ZZ)) &\cong \Hom_{\CRing}(A \otimes_{\FF} \ZZ, R)\\
&\cong \Hom_{\FAlg}(A, HR) \\
&\cong \Hom_{\AffSch_{\mathbb{F}_1}}(\Spec(HR), \Spec A).
\end{align*}
\end{proof}

We illustrate base change with several geometric examples:

\begin{example}[Base change of the absolute point]\label{ex:base-change-point}
$$(\Spec \FF) \otimes_{\FF} \ZZ = \Spec(\FF \otimes_{\FF} \ZZ) = \Spec \ZZ.$$

The absolute point becomes the terminal object $\Spec \ZZ$ under base change.
\end{example}

\begin{example}[The absolute affine line]\label{ex:affine-line-base-change}
Let $\mathbb{N}$ be the commutative monoid of natural numbers with additive operation and $0$ as unit. Then:
$$(\Spec \mathbb{S}\mathbb{N}) \otimes_{\FF} \ZZ = \Spec \ZZ[\mathbb{N}] = \Spec \ZZ[t].$$

This justifies viewing $\Spec \mathbb{S}\mathbb{N}_+^{\times}$ as the \emph{absolute affine line} $\mathbb{A}^1_{\mathbb{F}_1}$: it becomes the classical affine line $\mathbb{A}^1_{\ZZ} = \Spec \ZZ[t]$ upon base change.
\end{example}

\begin{example}[Absolute affine $n$-space]\label{ex:affine-n-space}
More generally, for absolute affine $n$-spaces, we have:
$$(\Spec \mathbb{S}(\mathbb{N}^n)) \otimes_{\FF} \ZZ = \Spec \ZZ[t_1, \ldots, t_n] = \mathbb{A}^n_{\ZZ}.$$
\end{example}

\begin{example}[Eilenberg-Maclane Algebras]\label{ex:fixed-points}
For a commutative ring $R$:
$$(\Spec HR) \otimes_{\FF} \ZZ = \Spec(HR \otimes_{\FF} \ZZ) = \Spec R.$$

This shows that classical affine schemes can be recovered by considering the base change of the absolute affine scheme of the corresponding Eilenberg-Maclane objects, in contrast to the case of Deitmar's monoid schemes, where only toric varieties are the results of base change \cite{toric}. Therefore, this provides a much larger pool of candidates that can be considered ``affine schemes over $\FF$'', incorporating the case of classic affine schemes, or affine schemes over $\ZZ$.
\end{example}


\bibliographystyle{amsalpha}  
\bibliography{ref}

\end{document}